\newtheorem{theorem}{Theorem}
\newtheorem{lemma}[theorem]{Lemma}
\newtheorem{conj}[theorem]{Conjecture}
\newtheorem*{claim*}{Claim}
\newtheorem{problem}{Problem}
\newtheorem{prop}[theorem]{Proposition}
\theoremstyle{definition}
\theoremstyle{remark}
\def\eps{\varepsilon}
\def\F{\mathcal{F}}
\def\R{\mathbb{R}}
\def\Z{\mathbb{Z}}
\def\Xt{\widetilde{X}}
\def\St{\widetilde{S}}
\def\P{{\mathbb P}}
\def\E{{\mathbb E}}
\newcommand{\one}{\mathbf{1}}
\def\win{G}
\def\Cox{\hfill \Box}
\def\ee{\varepsilon}
\def\diseq{\stackrel {{\cal D}} {=}}
\def\|{\, | \, }
\begin{document}
	
	\begin{center}
		{\large \bf Success probability for selectively neutral 
			invading species in the line model {with a random fitness landscape}} 
	\end{center}
	\vfill
	
	\begin{flushright}
		Suzan Farhang-Sardroodi\footnote{Department of Mathematics,  
			Ryerson University, Toronto, Ontario M5B 2K3, Canada}, 
		Natalia L. Komarova\footnote{Department of Mathematics, University of
			California Irvine, Irvine, CA 92697; partially supported by NSF grant DMS-1812601}, 
		Marcus Michelen\footnote{Department of Mathematics, Statistics and Computer Science, University of Illinois 
			at Chicago, 
			851 S. Morgan Street Chicago, IL 60607-7045, Corresponding author} and 
		Robin Pemantle\footnote{Dept. of Mathematics, University of Pennsylvania, 
			209 South 33rd Street, Philadelphia, PA 19104.  Partially supported by 
			NSF grant DMS-1612674} \\
		{\tt suzan.farhang@ryerson.ca \\ 
			komarova@uci.edu \\ 
			michelen.math@gmail.com \\ 
			pemantle@math.upenn.edu}
	\end{flushright}
	\vfill
	
	\noindent{\sc Abstract:} \\[1ex]
	We consider a spatial (line) model for invasion of a population by a single mutant
	with a stochastically selectively neutral fitness landscape, independent
	from the fitness landscape for non-mutants.  This model is similar to those
	considered in~\cite{farhang2017effect, farhang2019environmental}.
	We show that the probability 
	of mutant fixation in a population of size $N$, {starting from a single mutant,} is greater than
	$1/N$, which would be the case  if there were no variation in fitness
	whatso{e}ver.  In the small variation regime, we recover precise asymptotics
	for the success probability of the mutant. This demonstrates 
	that the introduction of randomness provides an advantage to
	{minority} mutations in this model, and shows that the advantage increases with the system size. {We further demonstrate that the mutants  have an advantage in this setting only because they are better at exploiting unusually favorable environments when they arise, and not because they are any better at exploiting pockets of favorability in an environment that is selectively neutral overall.} 
	\vfill
	\vfill
	\vfill
	
	\noindent{\sc Keywords}: birth-death process, random environment, RWRE. 
	\\
	
	\noindent{\sc MSC Classification:} 60J80, 92D15.
	\\[1ex]
	
	\clearpage
	
	%\tableofcontents
	
	\section{Introduction}
	
	Evolution in  random environments has attracted attention of ecologists and mathematical biologists for a long time. Consider {direct} competition dynamics between two types of organisms whose reproduction and death rates may be different in different spatial locations. It is clear that organisms with larger reproduction rates and lower death rates are more likely to rise from low numbers and eventually replace their slow reproducing, rapidly dying counterparts. The situation becomes more complicated if the environment consists of different patches, where different types enjoy evolutionary advantage while others are suppressed. Depending on the properties of this patchy environment, the reproduction and death rates of the organisms, and the details of the evolutionary process, a number of outcomes can be observed, see e.g. \cite{chesson1981environmental, pulliam1988sources, hassell1994species, hanski2004ecology}, and also Modern Coexistence Theory \cite{ellner2019expanded}.
	
	{From early works of Haldane \cite{haldane1927mathematical}, Fisher \cite{fisher1930evolution} and Wright \cite{wright1931evolution} almost 100 years ago, an important focus of many theoretical studies of evolution has been the probability and timing of mutant fixation, see also Kimura's studies of neutral evolution \cite{kimura1968evolutionary, kimura1989neutral}. The general setting assumes the coexistence of different variants of an organism in a population, one of which is referred to as the ``wild type" (or ``normal"), and the other(s) as ``mutants" (or variants). Mutations may or may not confer selective advantage or disadvantage to an organism. In general, the term ``neutral' in evolutionary theory refers to the type of variants that, although different  the wild type, is neither advantageous nor disadvantageous, that is, it does not experience a positive or negative selection pressure.}
	
	{Mutant evolution in {\it random environments} became a topic of mathematical investigation around 1960s. Many early papers studied temporal fluctuations of the environment. For example, in \cite{gillespie1977natural}, it was assumed that while the wild types had constant numbers of offspring, mutants’ numbers of offspring were randomly changing every time step (but had the same mean as the wild types' offspring numbers). It was found that despite having the same mean number of offspring,  the mutants behaved as if they were disadvantageous. References \cite{frank1990evolution, frank2011natural} studied a more general setting, where the division rates of both wild types and mutants were  affected by the environmental changes. It was found that, surprisingly, the mutants behaved as if they were advantageous, despite having the same mean division rate, but only if the mutants were initially a minority. A similar result was found by  \cite{melbinger2015impact, cvijovic2015fate}. Many results have been obtained in the framework of the Modern Coexistence Theory in ecology, e.g. regarding the instantaneous rate of increase of a rare species \cite{chesson1994multispecies, chesson2000mechanisms, adler2007niche}. It was shown analytically by \cite{chesson1981environmental,meyer2018noise,meyer2020evolutionary} that temporal randomness in division rates  leads to a positive rate of increase of a minority mutant. Another set of analytical results concerns  extinction times  \cite{kessler2015neutral,hidalgo2017species, danino2018fixation}.}
	
	{In contrast to temporal variations, spatial environmental variations are associated with fitness differences that characterize different spatial locations (and do not change in time). For example, one can consider a stylized model where light conditions differ in different locations, and therefore growth and reproduction properties of plants may differ  spot to spot. Let us suppose that the wild type plant needs high light to grow, but a mutant prefers shade. Then  spots characterized by strong lighting conditions will result in an increase in wild type growth rate and a decrease in mutant growth rate. What can we say about the mutant fixation probability if the ``high light" and ``low light" spots are distributed with equal likelihood? In this example, the fitness values of wild type and mutant organisms are anti-correlated, that is, in a given spot, if a wild type plant has an elevated fitness value, a mutant will have a reduced fitness value. Different scenarios are possible, including the case where fitness values of wild type and mutant organisms are uncorrelated; this would correspond to a situation where the growth properties of wild type and mutant plants are determined by different and uncorrelated environmental factors, such as light and nutrients.}
	
	Two {important} examples of  biological systems  where evolution takes place in the presence of spatial randomness, are biofilms and tumors.  Biofilms are collectives of microorganisms, such as bacteria or fungi, that coexist on surfaces within a slimy extracellular matrix. Evolutionary dynamics of these microorganisms take place in an environment characterized by significant heterogeneities, both in physical and chemical parameters, such as heterogeneities in the  interstitial fluid velocity, gradients in the distribution of nutrients and  other metabolic substrates/products \cite{stewart2008physiological, jayasinghe2014metabolic}. It has been suggested \cite{boles2004self}  that different organisms may respond differently to these diverse environmental stimuli, giving rise to evolutionary co-dynamics that can be modeled by using models similar to those studied here. The second example is evolution in cancerous populations, where the presence of highly heterogeneous environments has been documented, see e.g. \cite{li2007visualization, graves2010tumor}. Cancerous cells in different locations across a tumor are exposed to different concentrations of  oxygen, nutrients, immune signaling molecules, inflammatory mediators, and other non-malignant cells that comprise the tumor microenvironment. Understanding tumor evolution under these spatially heterogeneous conditions is essential for  understanding and combating long-standing challenges in oncology such as drug resistance in tumors. It also presents opportunities for creating new therapeutic strategies \cite{yuan2016spatial}.

	{In the literature, several modeling approaches have been used to study spatial randomness. In one class of models, agents are placed  on a random network, where different vertices have different degrees; the nodes'  fitness values are based on their numbers of interactions, making some vertices more advantageous than others. These types of settings have been used e.g. in the context of the game theory/cooperation (e.g. \cite{santos2005scale, santos2006graph, santos2006cooperation, santos2008social, tomassini2007social, maciejewski2014evolutionary}). Another class of models is a finite island model, where agents are placed in patches (characterized by environmental differences) and a certain degree of patch-to-patch migration is assumed. Mutant fixation probability has been studied in the high migration rate \cite{nagylaki1980strong} and the low migration rate \cite{tachida1991fixation} limit. Mutant fixation probability in the problem with two patches has been solved analytically in \cite{gavrilets2002fixation}, where it was assumed that the mutation is advantageous in one patch and deleterious in the other patch.  An extension  to a multiple patch model was provided in \cite{whitlock2005probability}, who investigated the accuracy of various approximations for mutant fixation probability. The role of spatially variable environments has been also addressed by the Modern Coexistence Theory, see e.g. studies of species coexistence in \cite{chesson2000general}.}
	
	{In the recent papers \cite{farhang2017effect, farhang2019environmental} we studied the dynamics of mutant fixation in a model that is a generalization of the classical Moran model \cite{moran1958random} and includes spatial randomness. }  We assumed that the population of organisms (or agents)  remains constant and birth/death updates are performed with rules governed by the organisms' fitness parameters (birth and/or death rates). Interactions of replacing dead organisms by offspring of others happen along edges of a network that defines ``neighborhoods". For example,  in a model characterized by agents on a complete graph, every agent is in the neighborhood of everyone else, and therefore a dead organism can be replaced by offspring of any other agent. On the other hand, on a circular graph, each agent has exactly two neighbors. It was assumed that, for each realization of the evolutionary competition process, for each of the $N$ sites, the birth and/or death rates of both types were assigned by randomly drawing  the same distributions of values. Then the probability of mutant fixation, starting  a given initial location of mutant agents among the $N$ spots, was calculated. Finally, this probability was averaged over all realizations of the fitness values. It was found that, somewhat surprisingly, the mutants showed an advantage compared to the normal types, as long as their initial number was smaller than a half. This result can be obtained for particular (relatively small) numbers of $N$, but no asymptotic results for large values of $N$ were obtained analytically. {It was observed, however, that the effect of randomness to ``favor" minority mutant increased with the system size.}

	In this paper, we {focus on the asymptotic behavior of the fixation probability of mutants in the presence of spatial randomness. We consider a spatial model similar to that used in \cite{farhang2017effect, farhang2019environmental}. It is a spatial (1D) version of the Moran process (see e.g. \cite{komarova2006spatial}) where spatial variations in the environment are implemented by random fitness values of wild type and mutant individuals at different sites. Models of this type (but without random fitness values) have been used previously to study cellular evolution in the context of cancerous transformation (\cite{michor2004linear, komarova2006spatial}) and are relevant for describing e.g. colonic crypts. To the best of our knowledge the results reported here are the first rigorous results for the problems of this kind.}

	\section{{Model formulation} and results}

	We consider the following model.  The spatial environment consists of $N$ sites, numbered $1, \ldots , N$
	arranged in a line { with nearest-neighbor edges}.  At each site there are two real parameters 
	representing \emph{fitness} values: a mutant 
	fitness and a normal fitness, each chosen IID $1 \pm \delta$.  
	These fitness values will remain fixed while the \emph{state} of each site 
	will change.  Site $1$ begins with state ``mutant'' and all other sites
	begin with the state ``normal.''  The evolution proceeds { in discrete time} 
	as follows: { replace each edge with two directed edges, one in each direction; 
		at each time-step  choose a directed edge $(j,k)$ with $|j - k| = 1$ uniformly at random, and let $\nu_k$ and $\mu_k$ be the normal and mutant fitnesses of $k$; if $j$ is mutant, then we set $k$ to be mutant with probability $\mu_k/(\nu_k + \mu_k)$ and leave $k$ unchanged with the remaining probability; similarly, if $j$ is normal
		then we set $k$ to be normal with probability $\nu_k/(\nu_k + \mu_k)$ and leave it unchanged otherwise.
		
		This model may also be thought of as occurring in continuous time: %we may replace each undirected 
		%edge with two directed edges, one in each direction.  
		Each directed edge is
		assigned an exponential clock of rate $1$.  When the clock  edge $u$ to $v$ rings, 
		$u$ attempts to replace the type of $v$ with its own type; if $u$ is mutant, then the state of 
		$v$ is set to be mutant with $\frac{\mu_v}{\mu_v + \nu_v}$ and is unchanged with 
		the remaining probability.  
		%The case when $u$ is ordinary is symmetric.  
		%When the underlying graph is a line---as is the case at hand---the former description will be 
		%more useful.  The latter description, however, can be used to define the model on \emph{any} 
		%finite graph.
	}
	
	%for each pair of neighbors $u$ and $v$, $u$ attempts to replace the type
	%of $v$ with its own type at rate $1$, with success probability equal
	%to the fitness of $v$ corresponding to the state of $u$ divided by the
	%total fitness of $v$.  For instance, if $u$ is ``mutant,'' then at rate $1$
	%$u$ will attempt to make $v$ mutant with success probability equal to the 
	%mutant fitness at $v$ divided by the sum of mutant and normal fitnesses at
	%$v$.  
	Since there are only finitely many sites and only two types, the process eventually 
	fixates  in one of two states: all mutants or all normal.  
	We are interested in the probability of the event $\win$ of fixating in the state 
	where all sites are mutants, and in particular  how the probability that 
	$\win$ occurs changes---
	after averaging over the random environment---as $\delta$ varies.  More 
	concretely: should more or less randomness help the mutant dominate?
	
	If $\delta = 0$, there is no differential fitness  and the fitness
	environment is deterministic.  After $k$ replacements,
	the mutants will always either be extinct or occupy some interval 
	$1, \ldots, X_k$.  The process $\{ X_k \}$ is a simple random walk
	stopped when it hits $0$ or $N$, hence the probability that it stops 
	at $N$ is precisely $1/N$.  {Biologically this means that in the absence of any fitness differences between the wild type and mutant cells, the probability of any cell to fixate is the same and equals $1/N$. Note that if fixation probability is greater (smaller) than the initial share of the mutant, then this indicates the presence of positive (negative) selection acting in the system. }

	In the model considered here, when $\delta > 0$, the dynamics become more complicated.  In fact they are 
	the dynamics of a birth-death process in a random environment; 
	equivalently, the dynamics may be thought of as a variant of the voter
	model where each site may be more or less susceptible to a given
	type.  A similar model, but with circular boundary conditions 
	($1$ and $N$ are neighbors), was analyzed in~\cite{farhang2017effect}.  
	There, it was proved for $4 \leq N \leq 8$ and empirically observed for much
	larger values of $N$ that the probability of a mutant takeover 
	is strictly greater than $1/N$, {indicating the presence of positive selection for the mutant, although its fitness values are chosen  the same distribution as those for the wild type cells.}  The goal of this paper is to 
	establish the analogous result rigorously for the line model and 
	to give precise asymptotics for the  annealed probability of a
	mutant takeover.\\

	Let $(\Omega_N , \F_N , \P_N)$ be a probability space on which are defined
	independent Rademacher random variables (that is $\pm 1$ fair coin flips)
	$B_1, \ldots , B_N$ and $B_1', \ldots , B_N'$, as well as rate~1 Poisson
	processes $\xi_t^{(i,j)}$ for $1 \leq i,j \leq N$ and $|i-j| = 1$, 
	independent of the Rademacher variables and of each other.  For $\delta 
	\in (0,1)$, the normal  fitness at site $k$ is the quantity 
	$\mu_k := 1 + \delta B_k$ and the mutant fitness at site $k$ is the 
	quantity $\nu_k := 1 + \delta B_k'$; in this way, the model is defined
	simultaneously for all $\delta$, although we will not do much to exploit 
	this simultaneous coupling.
	
	The states of the process are configurations where each site has 
	a mutant (one) or normal cell (zero).  { Since we always consider
		the starting condition of having one mutant at site $1$ and all others are normal,
		the collection of mutant cells is always some}
	%Due to the geometry, the
	%only possible configurations are mutant cells at some initial some 
	segment of sites $[1,k]$ and normal cells thereafter.  Hence 
	we can identify the state space with $\{ 0 , 1, \ldots , N \}$,
	with $0$ corresponding to mutant extinction.  { Since we need only
		keep track of the right-most mutant to describe the state of the process,
		we first find the transition probabilities for the evolution of this right-most point.  Figure \ref{fig:model} shows an instance of the model along with this identification.}
	
	\begin{figure}[!ht]
		\centering
		\includegraphics[width=6in]{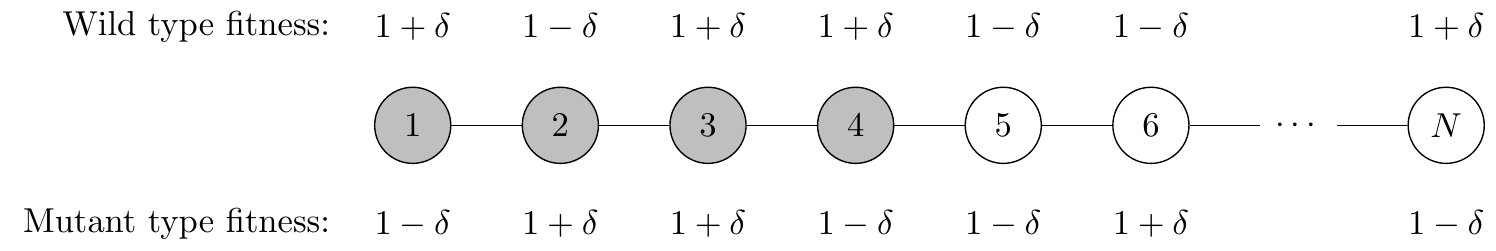}
		\caption{An instance of the model with mutant sites in gray and normal sites in white and both mutant and wild fitness types listed. The above state is identified with $4$, since the mutant sites are $\{1,2,3,4\}$.}
		\label{fig:model}
	\end{figure}
	
	At times { corresponding to points of the Poisson process }
	$\xi^{(i,j)}$, cell $i$ tries to reproduce at site $j$. 
	This only matters if $i=k$ or $j=k$, { since otherwise sites $i$ and $j$
		have the same state and no change in state can occur.}
	Sampling only when the configuration changes yields a discrete time birth and death
	chain, absorbed at 0 and $N$, whose transition probabilities 
	are easily characterized.  Define the random quantities
	\begin{equation} \label{eq:beta}
	\beta_k := \frac{\mu_k}{\nu_k + \mu_k} \, .
	\end{equation}
	From state $k$ the only relevant directed edges are $(k,k+1)$
	and $(k+1,k)$ { since these corresponds to the mutant site $k$
		making $k+1$ mutant and normal site $k+1$ making $k$ normal.}
	Both attempted at rate~1 and succeeding with
	respective probabilities $\beta_{k+1}$ and $1 - \beta_k$. {
		Letting $p_k$ denote the transition probability  the right-most mutant
		being $k$ to being $k+1$}, we have 
	\begin{equation} \label{eq:p_k}
	p_k = \frac{\beta_{k+1}}{\beta_{k+1} + (1 - \beta_k)}  \, .
	\end{equation}
	
	{ We may now think of the evolution as occurring entirely
		on $\{0,1,\ldots,N\}$ where state $k$ moves to step $k+1$ with probability
		$p_k$ and moves to $k-1$ with probability $1 - p_k$.
	}
	
	Let $\win = \win(\delta)$ denote the event that the absorbing 
	state $N$ is reached before the absorbing state~0, under dynamics
	for the given $\delta$.  Our first result is an asymptotic expression 
	for $\P_N (\win(\delta))$ in the regime where $N \to \infty$ and
	$\delta \sqrt{N} \to c$.
	\begin{theorem}[asymptotics when $\delta \sqrt{N} \to c$] \label{th:main}
		Fix $c > 0$ and suppose $N \to \infty$ and $\delta \sqrt{N} \to c$.  Then
		\begin{equation} \label{eq:main}
		N \P_N(\win(\delta)) \to g(c)
		\end{equation}
		where 
		$$g(c) =  \E \left[ \frac{1}{\int_{0}^1 \exp(\sqrt{2} c B_s) \,ds } 
		\right]$$
		for a standard Brownian motion $\{ B_s \}$.  
		The function $g$ is continuous and strictly increasing on $(0,\infty)$.  
		It satisfies 
		\begin{eqnarray} 
		g(c) - 1 & \sim & \frac{c^2}{{6}} \qquad \mbox{ as } c \downarrow 0 \, ;
		\label{eq:c=0} \\
		g(c) & \sim & \frac{c}{\sqrt{\pi}} \qquad \mbox{ as } c \to \infty \, .
		\label{eq:c=infty}
		\end{eqnarray}
	\end{theorem}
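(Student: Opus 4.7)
The plan is to start from the classical gambler's-ruin formula for the birth-death chain on $\{0,\ldots,N\}$ embedded at jump times: since the process starts at $1$ and is absorbed at $0$ or $N$, we have $\P_N(\win(\delta)) = 1/S_N$ where $S_N := \sum_{i=0}^{N-1}\gamma_i$ and $\gamma_i := \prod_{j=1}^{i}(1-p_j)/p_j$. Using (\ref{eq:p_k}) the ratio simplifies to $(1-p_j)/p_j = (1-\beta_j)/\beta_{j+1} = \nu_j(\mu_{j+1}+\nu_{j+1})/[\mu_{j+1}(\mu_j+\nu_j)]$, and the $(\mu+\nu)$ factors telescope in the product, leaving
\[
\gamma_i = \frac{\mu_{i+1}+\nu_{i+1}}{\mu_1+\nu_1}\cdot R_i, \qquad R_i := \prod_{j=1}^{i}\frac{\nu_j}{\mu_{j+1}}.
\]
The telescoped edge factor is $1+O(\delta)$ uniformly in $i$, so the analysis reduces to $S_N \approx \sum_{i=0}^{N-1} R_i$.

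The second step is to apply Donsker's invariance principle. Because $(B_j')^2 = B_{j+1}^2 = 1$, the quadratic terms in $\log(1+\delta B)$ cancel exactly and one gets $\log R_i = \delta\,\widetilde S_i + O(i\delta^3)$, where $\widetilde S_i := \sum_{j=1}^{i}(B_j' - B_{j+1})$ is a sum of i.i.d.\ mean-zero, variance-$2$ increments (all Rademachers appearing are pairwise distinct across $j$). Under $\delta\sqrt N \to c$ the cubic error is $O(c^2/\sqrt N)$ and vanishes, so Donsker yields $\delta\,\widetilde S_{\lfloor tN\rfloor} \Rightarrow \sqrt 2\,c\,B_t$ in $D[0,1]$. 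Continuous mapping and a Riemann-sum approximation then give
\[
\frac{S_N}{N} \;\Rightarrow\; Y(c) := \int_0^1 e^{\sqrt 2\, c\, B_t}\,dt,
\]
whence $N\P_N(\win(\delta)) = N/S_N \Rightarrow 1/Y(c)$ in distribution. Upgrading this to convergence in expectation (i.e.\ to (\ref{eq:main})) requires uniform integrability of $N/S_N$; I would obtain this via a lower-tail concentration bound showing that $S_N \geq \kappa N$ except on an event of probability $O(N^{-K})$ for any $K$, combined with the trivial pointwise bound $N/S_N \leq N$ on the exceptional event.

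For the endpoint asymptotics, the case $c \downarrow 0$ is routine Taylor expansion. Writing $A := \int_0^1 B_s\,ds$ and $Q := \int_0^1 B_s^2\,ds$, one has $Y(c) = 1 + \sqrt 2 c A + c^2 Q + O(c^3)$ and hence $1/Y(c) = 1 - \sqrt 2 c A - c^2 Q + 2c^2 A^2 + O(c^3)$; taking expectations with $\E[A]=0$, $\E[Q]=1/2$, $\E[A^2]=1/3$ produces $g(c) - 1 = c^2/6 + O(c^3)$. The case $c \to \infty$ is more delicate: $Y(c)$ concentrates near the argmax $\tau$ of Brownian motion, and via Williams' path decomposition the pre- and post-maximum processes $M - B_{\tau \pm u}$ (with $M := B_\tau$) are conditionally independent Bessel-$3$ processes started at $0$. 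Brownian scaling $R^{\mathrm{Bes}}_{u/(2c^2)} \stackrel{d}{=} R^{\mathrm{Bes}}_u/(c\sqrt 2)$ produces the local asymptotic $Y(c) \sim e^{\sqrt 2 c M}\,C/(2c^2)$ for a positive random variable $C$ depending only on the Bessel-$3$ processes. Combining this with the half-normal density $\sqrt{2/\pi}\,e^{-m^2/2}\mathbf{1}_{m\geq 0}$ of $M$ and the Laplace asymptotic $\E[e^{-\sqrt 2 c M}] \sim 1/(c\sqrt{2\pi})$ yields $g(c) \sim c\sqrt 2\,\E[1/C]/\sqrt\pi$; matching to $c/\sqrt\pi$ then amounts to verifying $\E[1/C] = 1/\sqrt 2$, which reduces to a computation on Bessel-$3$ functionals.

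Continuity of $g$ on $(0,\infty)$ follows from dominated convergence. Monotonicity is the subtlest piece: I would try to derive it either from monotonicity of $\P_N(\win(\delta))$ in $\delta$ at the discrete level (if provable for fixed $N$, it passes to the $c$-limit), or directly from the Brownian formula via a convexity/exchange argument. The main obstacles I anticipate are (i) the uniform-integrability upgrade, (ii) the careful execution of Williams' decomposition and the Bessel-functional identity $\E[1/C] = 1/\sqrt 2$ in the large-$c$ asymptotic, and (iii) the monotonicity of $g$, which is not transparent from the limiting formula.
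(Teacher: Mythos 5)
Your reduction to the gambler's-ruin sum, the telescoping of the $(\mu+\nu)$ factors, and the Donsker step reproduce the paper's Section~3. However, the uniform-integrability step as you propose it would fail: the claim that $S_N \geq \kappa N$ outside an event of probability $O(N^{-K})$ is false for every fixed $\kappa>0$. Indeed $S_N/N$ converges in distribution to $Y(c)=\int_0^1 e^{\sqrt2\,c\,B_t}\,dt$, and $\P[Y(c)<\kappa]$ is a strictly positive constant (Brownian motion drops below $\tfrac{\log\kappa}{\sqrt2\,c}$ in a short initial time with positive probability), so $\P[S_N<\kappa N]$ tends to a positive limit rather than to $0$. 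What is actually needed, and what the paper proves, is a tail bound decaying in $x$ uniformly in $N$: writing $\St_j$ for the embedded lazy walk, one has $N/\sum_{j<N} e^{\St_j}\leq \eps^{-1}\exp(-\min_{j\leq \eps N}\St_j)$, and choosing $\eps\asymp 1/x$ together with the reflection principle and Hoeffding gives $\P[\min_{j\leq \eps N}\St_j<-1]\leq 2e^{-B/\eps}$, hence $\P[N/S_N>x]\leq Ce^{-cx}$. Your instinct (control the lower tail of $S_N$) is right, but the quantitative statement you propose to prove is not true, so this step needs to be redone.

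The other substantive gaps are in the asymptotics and monotonicity of $g$. Your $c\downarrow 0$ expansion (with $\E[A]=0$, $\E[Q]=1/2$, $\E[A^2]=1/3$) is a correct and genuinely more elementary alternative to the paper's computation, modulo uniform control of the $O(c^3)$ remainder. But for $c\to\infty$ your Laplace/Williams programme is incomplete at exactly the point that matters: on $[0,1]$ the pre- and post-maximum pieces are Brownian meanders with boundary effects at $0$ and $1$ (the argmax may sit near an endpoint), not Bessel-3 processes on an infinite horizon, and the identity $\E[1/C]=1/\sqrt2$ --- which is the entire content of the constant in \eqref{eq:c=infty} --- is left unverified. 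Likewise, neither of your two candidate strategies for strict monotonicity is carried out, and discrete monotonicity of $\P_N(\win(\delta))$ in $\delta$ for fixed $N$ is not known. The paper resolves all three issues at once by quoting the Matsumoto--Yor conditional formula $\E[A_2(t)^{-1}\mid B_t=x]=xe^{-x}/(t\sinh x)$: integrating against $N(0,t)$ gives $m_2(t)\sim\sqrt{2/(\pi t)}$ and hence $g(c)\sim c/\sqrt\pi$ via the scaling identity $g(c)=\tfrac{c^2}{2}m_2(c^2/2)$; expanding $xe^{-x}/\sinh x=1-x+x^2/3+O(x^3)$ gives the $c^2/6$ asymptotic; and checking $\phi''>0$ for $\phi(x)=xe^{-x}/\sinh x$ shows the Gaussian average is strictly increasing in the variance, giving monotonicity. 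As written, your proposal establishes \eqref{eq:main} only modulo the (fixable) uniform-integrability gap, and does not establish \eqref{eq:c=infty} or the strict monotonicity.
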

	
	We note that continuity of $g$ implies that for $\delta \ll N^{-1/2}$, then $\P_N(G(\delta)) \sim 1/N$ as in the $\delta = 0$ case. 
	In the regime where $\delta \gg N^{-1/2}$ but still $\delta \ll 
	(\log N)^{-\ee}$, the asymptotic behavior of $\P_N (G(\delta))$
	is as follows.
	
	\begin{theorem} \label{th:better} Assuming $\delta \sqrt{N} \to \infty$, suppose that there
		is an $\ee > 0$ such that $\delta (\log N)^\ee \to 0$.  Then
		$$\P_N (G(\delta)) \sim \frac{\delta}{\sqrt{\pi N}} \, .$$
	\end{theorem}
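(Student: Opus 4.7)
Plan: The overall strategy is to derive Theorem~\ref{th:better} as a quantitative extension of Theorem~\ref{th:main} in the regime where $c_N := \delta \sqrt{N} \to \infty$, combined with the large-$c$ asymptotic $g(c) \sim c/\sqrt{\pi}$ of \eqref{eq:c=infty}. If one can show $N\P_N(\win(\delta)) = g(c_N)(1+o(1))$, then substituting $c_N = \delta\sqrt{N}$ gives $\P_N(\win(\delta)) \sim \delta/\sqrt{\pi N}$ as desired.

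The gambler's ruin identity $\P_N(\win(\delta))^{-1} = 1+\sum_{k=1}^{N-1}R_k$ with $R_k := \prod_{j=1}^k (1-p_j)/p_j$ reduces the problem to analysing the sum $\sum_k R_k$. Substituting \eqref{eq:beta}, \eqref{eq:p_k} and telescoping gives $R_k = (1+O(\delta))\, e^{S_k}$ with $S_k = (\alpha/2)T_k$, where $\alpha := \log\tfrac{1+\delta}{1-\delta} = 2\delta + O(\delta^3)$ and $T_k := \sum_{j=1}^k (B_j' - B_{j+1})$ is a mean-zero random walk with bounded i.i.d.\ increments in $\{-2,0,2\}$. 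A strong invariance principle (KMT or Skorokhod embedding) couples $T_k$ to a Brownian motion $W$, so that $S_k \approx \sqrt{2}\,c_N B_{k/N}$ for a standard Brownian motion $B$, and hence
\begin{equation*}
\frac{1}{N}\sum_{k=1}^{N-1}R_k \;=\; (1+o_\P(1))\int_0^1 e^{\sqrt{2}\,c_N B_t}\,dt ,
\end{equation*}
which, after taking reciprocals, multiplying by $N$, and passing expectations through via uniform integrability, gives $N\P_N(\win(\delta)) = (1+o(1)) g(c_N)$. Combined with \eqref{eq:c=infty}, this yields the claim.

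The principal obstacle is the quantitative coupling in the middle step: a naive KMT estimate gives $|T_k - \sqrt{2}\,W_k| = O(\log N)$, equivalently $|S_k - \sqrt{2}\,c_N B_{k/N}| = O(\delta \log N)$, which is $o(1)$ only under $\delta\log N \to 0$, stronger than the hypothesised $\delta(\log N)^\ee \to 0$. The resolution exploits that $\int_0^1 e^{\sqrt{2}\,c_N B_t}\,dt$ is Laplace-dominated by a short window of length $\asymp 1/c_N^2$ near the argmax of $B$, so that errors growing at a polylogarithmic rate become negligible once multiplied by this small window length. Formalising this localisation, likely via a path decomposition of the random walk at its maximum, is the core technical work; the exponent $\ee$ in the hypothesis is precisely the slack needed to absorb all such error terms uniformly. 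As a sanity check---and an alternative route that bypasses Theorem~\ref{th:main} altogether---one may directly analyse $\sum_k e^{S_k}$ by writing it as $e^{M_N}\sum_k e^{S_k - M_N}$ with $M_N := \max_k S_k$: the running-maximum invariance principle combined with Laplace's method near the origin of $|Z|$, $Z \sim N(0,1)$, gives $\E[e^{-M_N}] \sim 1/(c_N\sqrt{\pi})$, while a local analysis near the maximum (the walk resembles a rescaled Bessel(3) process) shows the second factor has typical size $\asymp 1/\delta^2$, together producing $\E[\P_N(\win(\delta))] \sim \delta^2/(c_N\sqrt{\pi}) = \delta/\sqrt{\pi N}$ with the correct constant.
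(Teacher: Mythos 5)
Your high-level reduction is sound: the gambler's-ruin formula, the comparison $R_k = e^{O(\delta)} e^{\St_k}$ with a lattice walk, and the final substitution $g(c_N) \sim c_N/\sqrt{\pi}$ all match the paper's framework, and you correctly identify the central obstruction, namely that the KMT error $O(\log N)$ becomes a multiplicative factor $e^{O(\delta \log N)}$ which is not $1+o(1)$ when only $\delta (\log N)^{\ee} \to 0$ is assumed. The gap is in your proposed resolution. Localizing $\int_0^1 e^{\sqrt{2} c_N B_t}\,dt$ to a window of length $c_N^{-2}$ near the argmax does not help: the coupling error enters multiplicatively in every term, $e^{S_k} = e^{\sqrt{2} c_N B_{k/N} + E_k}$ with $|E_k|$ possibly as large as $C\delta \log N \to \infty$, and restricting the sum to a short window leaves that factor intact. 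Moreover, the relevant localization is not around the argmax of a typical path: since the quantity is $\E[(\sum_k e^{S_k})^{-1}]$, the expectation is dominated by the rare paths on which the walk descends immediately and stays negative. For those paths the sum is determined by the first $T = \lceil \delta^{-r}\rceil$ steps, where the KMT error is only $O(\delta \log(1/\delta)) = o(1)$, while the remaining $N-T$ steps contribute only through the probability that the walk started at $Z = \St_T$ never returns above a level $-\delta^{-s}$, which is $\sim \frac{2(Z+\delta^{-s})^-}{\delta'\sqrt{\pi N}}$ by a uniform reflection estimate. It is in that hitting estimate---comparing the barrier height $|Z|/\delta' \asymp \sqrt{T} = \delta^{-r/2}$ with the coupling error $\log N$---that the hypothesis $\delta = o((\log N)^{-\ee})$ is actually consumed, by choosing $r$ large in terms of $\ee$. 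This two-scale decomposition (the paper's Lemmas~\ref{lem:main}, \ref{lem:RW} and \ref{lem:KMT-use}, which reduce everything to $\E[Z^-/A] \sim \delta^2$ and $\E[A^{-1}] \sim \delta/\sqrt{\pi T}$) is the missing idea, and without it the argument does not close under the stated hypothesis.

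Your ``alternative route'' has the right shape but is not a proof: replacing $\E\bigl[e^{-M_N}\bigl(\sum_k e^{S_k - M_N}\bigr)^{-1}\bigr]$ by $\E[e^{-M_N}]$ times the typical size $\delta^{-2}$ of the local-time factor silently factors the expectation of a product of strongly correlated variables (conditioning $M_N$ to be small forces the path to stay low, which changes the law of the excursion near its maximum), and the theorem is precisely a statement about the resulting constant. The paper sidesteps this by evaluating the limiting Brownian functionals exactly via the Matsumoto--Yor formulas.
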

	
	We do not expect this to hold if $\delta = \Theta (1)$ as $N \to \infty$ 
	because without scaling, the graininess of the random walk may lead
	to a different constant than would be obtained by a Brownian approximation.
	Nevertheless, we believe the condition $\delta = o(\log N)^{-\ee}$ to
	be unnecessary and we conjecture the following.
	\begin{conj} \label{conj:1}
		If $\delta \sqrt{N} \to \infty$ and $\delta \to 0$ then 
		$$\frac{\P_N (G(\delta))}{\delta N^{-1/2}} \to \frac{1}{\sqrt{\pi}} \, .$$
		In any case, as $N \to \infty$ and in the absence of the requirement  $\delta\to 0$, 
		$$0 < \frac{C_1 \delta}{N^{1/2}} \leq \P_N (G(\delta)) 
		\leq \frac{C_2 \delta}{N^{1/2}} \, .$$
	\end{conj}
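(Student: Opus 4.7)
The plan is to build on the reduction used for Theorems~\ref{th:main} and~\ref{th:better}. By the birth-death fixation formula,
$$\P_N(G(\delta)) = \E\!\left[\frac{1}{1+\sum_{k=1}^{N-1} T_k}\right], \qquad T_k := \prod_{j=1}^k \frac{1-\beta_j}{\beta_{j+1}}.$$
Since $B_j,B_j'\in\{\pm 1\}$, a direct algebraic computation (using $\log(1+\delta B)=\tfrac{1}{2}\log(1-\delta^2)+B\,\operatorname{arctanh}(\delta)$ on the non-telescoping part of $\log\rho_j$ and recognizing that the remaining pieces telescope) yields the \emph{exact} identity $\log T_k = \eta\,S_k + R_k$, where $\eta:=\operatorname{arctanh}(\delta)$, $S_k:=\sum_{j=1}^{k}(B'_j-B_{j+1})$ is a mean-zero random walk with i.i.d.\ increments in $\{-2,0,2\}$ (so $\operatorname{Var}(S_k)=2k$), and $|R_k|\le 2\eta$ uniformly in $k$. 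This reduces the conjecture to the analysis of $\E\!\left[(\sum_{k=0}^{N-1}e^{\eta S_k+R_k})^{-1}\right]$, with $c_N:=\eta\sqrt{2N}$ as the relevant large parameter.

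For the sharp asymptotic when $\delta\to 0$ and $\delta\sqrt{N}\to\infty$, the idea is to upgrade the Donsker-type approximation underlying Theorem~\ref{th:better} with a KMT-style strong coupling: there exists a Brownian motion $W$ with $\max_{k\le N}|\eta S_k - c_N W_{k/N}| = O(\eta\log N)$ almost surely, and the condition $\eta\log N\ll c_N$ is equivalent to $\log N\ll\sqrt{N}$, which holds unconditionally. Combined with continuity of $w\mapsto\int_0^1 e^{w_s}ds$ and with an $L^p$-bound (for some $p>1$) on $1/I(c):=1/\!\int_0^1 e^{\sqrt{2}cW_s}\,ds$ uniform as $c\to\infty$, this identifies $N\P_N(G(\delta)) = g(c_N)(1+o(1))$, and then the large-$c$ asymptotic $g(c)\sim c/\sqrt\pi$ from Theorem~\ref{th:main} delivers the conjectured limit. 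The technical hypothesis $\delta(\log N)^\eps\to 0$ is thereby dropped.

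For the two-sided bound without $\delta\to 0$, the Brownian limit no longer captures the right constants, so one must work on the discrete walk directly. The correct order should arise from a Laplace-type concentration of $\sum e^{\eta S_k}$ around the running maximum $M_N:=\max_{k\le N}S_k$: using the quadratic behavior of $M_N-S_k$ near the time of the maximum, the walk stays within $O(\eta^{-1})$ of $M_N$ for about $\eta^{-2}$ steps, whence with high probability $\sum_{k=0}^{N-1}e^{\eta S_k}\asymp \eta^{-2}\,e^{\eta M_N}$. Combined with $\E[e^{-\eta M_N}]\asymp (\eta\sqrt{N})^{-1}$, valid whenever $\eta\sqrt{N}\to\infty$ (by the reflection principle and a Gaussian-tail estimate on $|Z|$), this yields $\P_N(G(\delta))\asymp \eta^2\cdot(\eta\sqrt{N})^{-1} = \eta/\sqrt{N}\asymp\delta/\sqrt{N}$, matching the conjectured bounds.

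The principal obstacle is making the heuristic $\sum e^{\eta S_k}\asymp \eta^{-2}e^{\eta M_N}$ genuinely rigorous, with sufficient uniform control to interchange limit and expectation. This calls for sharp estimates on the joint law of $M_N$ and the excursion structure of $S_k$ near its maximum---via Brownian excursion scaling for Part~1 and via direct discrete methods for Part~2. A further complication at $\delta=\Theta(1)$ is that the remainder $R_k$ produces an $O(1)$ multiplicative factor that cannot be absorbed into $(1+o(1))$, so all constants must be tracked explicitly through the analysis rather than passed to a Brownian limit.
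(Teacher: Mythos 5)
First, note that the statement you are proving is stated in the paper as Conjecture~\ref{conj:1}: the authors do not prove it, and Theorems~\ref{th:main} and~\ref{th:better} are exactly the partial results they could establish. So the question is whether your plan would settle the conjecture, and it would not. Your first paragraph is correct and is essentially the paper's own reduction: the exact identity $\log T_k=\eta S_k+R_k$ with $\eta=\operatorname{arctanh}(\delta)=\delta'/2$, $|R_k|\le 2\eta$, is the same decomposition as $S_k=\St_k+O(\delta)$ in~\eqref{eq:delta} (your telescoping remainder is precisely the discrepancy controlled there), so this step buys nothing beyond~\eqref{eq:approx}.

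The genuine gap is in your second paragraph, and it is exactly the obstruction that forced the authors to leave this as a conjecture. The KMT error is an \emph{additive} $O(\eta\log N)$ in the exponent, hence a \emph{multiplicative} $e^{O(\eta\log N)}$ on $\sum_k e^{\eta S_k}$ and on its reciprocal; to conclude $N\P_N(G(\delta))=(1+o(1))\,g(c_N)$ you need $\eta\log N\to 0$, i.e.\ $\delta=o(1/\log N)$, which is the ``medium case'' already covered. Your comparison ``$\eta\log N\ll c_N$, which holds unconditionally'' is the wrong comparison: it controls the coupling error relative to the typical size of the field, not relative to the $(1+o(1))$ precision needed for the constant $1/\sqrt{\pi}$. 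For $\delta\asymp(\log N)^{-\eps}$ with $\eps<1$ your error factor is $e^{O((\log N)^{1-\eps})}\to\infty$, so the claim that the hypothesis $\delta(\log N)^{\eps}\to 0$ ``is thereby dropped'' is unsupported; the paper's Section~\ref{sec:better} avoids exactly this by applying KMT only on $[0,T]$ with $T=\lceil\delta^{-r}\rceil$ (where the error $O(\delta\log(1/\delta))\to 0$) and handling $[T,N]$ through the hitting estimate of Lemma~\ref{lem:RW}, and it is the requirement that the threshold $\delta^{-s}$ dominate the $O(\log N)$ coupling error in that tail estimate which produces the hypothesis you are trying to remove. Your third paragraph is, as you acknowledge, a heuristic: even granting $\sum_k e^{\eta S_k}\asymp\eta^{-2}e^{\eta M_N}$ with high probability and $\E[e^{-\eta M_N}]\asymp(\eta\sqrt N)^{-1}$ (both plausible and consistent with Lemmas~\ref{lem:main} and~\ref{lem:KMT-use}), passing from a high-probability comparison to $\E[1/\sum_k e^{\eta S_k}]\asymp\E[\eta^2e^{-\eta M_N}]$ requires the exceptional event to contribute $o(\delta/\sqrt N)$ --- the same order as the answer itself --- and the lower bound $\sum_k e^{\eta S_k}\gtrsim\eta^{-2}e^{\eta M_N}$ can fail (e.g.\ when the maximum is attained at a single early excursion), so both directions need quantitative excursion estimates that the proposal does not supply. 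In short: correct setup, but the two substantive steps are, respectively, based on a miscomparison and an unproved heuristic, and neither closes the conjecture.
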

	
	We interpret Theorems~\ref{th:main} and~\ref{th:better} as saying that 
	the stochastic environment favors a minority invader. {Indeed, in the absence of any randomness (that is, $\delta=0$), the probability of neutral mutant fixation on a circle is given by $1/N$ (a result that can be demonstrated e.g. by simple symmetry considerations). Mutant fixation on a line model similar (but not identical) to the present one was studied by \cite{komarova2006spatial} and it was shown that it depends on initial the location of the mutant. It is the smallest for a mutant originally located at one of the ends of a line and increases toward the middle initial location, but never exceeds the value $1/N$. In the present model, in the absence of randomness, mutant fixation probability is given by $1/N$. Theorems ~\ref{th:main} and~\ref{th:better} state that mutant fixation probability in the presence of randomness is greater than $1/N$, and that the quantity $N\P_N (G(\delta))$ increases with the system size ($N$) and with the amount of randomness ($\delta$). In other words, despite having no explicit advantage, a mutant in the random environment gets fixated with a probability that is significantly larger than in the case of a non-random environment.  }

	The following
	result shows that this effect is due to the minority taking
	advantage of the cases where the overall environment 
	is more favorable, not environments where pockets 
	favoring each type appear but are balanced against each other.
	
	\begin{theorem} \label{th:ident}
		Let $N = 2k$ be an even integer and let $Q_N$ denote $\P_N$
		conditioned on $\sum_j B_j = \sum_j B_j'$.  Then
		$N Q_N (G(\delta)) = 1$ for all $N$ and all $\delta$.
	\end{theorem}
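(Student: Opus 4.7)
The plan is to recast $h_1 := \P_N(G(\delta))$ in terms of the ratio variables $x_k := \nu_k/\mu_k$ so that the conditioning event $E := \{\sum_j B_j = \sum_j B_j'\}$ becomes the clean constraint $\prod_j x_j = 1$, after which the answer follows from a cyclic-shift symmetry.

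Starting from the gambler's ruin formula $h_1 = 1/\sum_{m=0}^{N-1} Q_m$ with $Q_m = \prod_{k=1}^m (1-p_k)/p_k$, a direct substitution into~\eqref{eq:p_k} gives $(1-p_k)/p_k = x_k(1+x_{k+1})/(1+x_k)$, and the resulting product telescopes to $Q_m = P_m(1+x_{m+1})/(1+x_1)$ where $P_m := \prod_{j=1}^m x_j$ (with $P_0 := 1$). Summing yields
\[ h_1 = \frac{1+x_1}{1 + 2\sum_{m=1}^{N-1} P_m + P_N}. \]
Since $B_j,B_j'\in\{\pm1\}$, the equality $\sum_j B_j = \sum_j B_j'$ forces the multisets $\{B_j\}$ and $\{B_j'\}$ to coincide, which is equivalent to $\prod_j \mu_j = \prod_j \nu_j$, i.e., $P_N = 1$. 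Under this constraint the formula collapses to
\[ h_1 = \frac{1+x_1}{2\sum_{m=0}^{N-1} P_m}. \]

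For $k \in \{0,1,\ldots,N-1\}$ let $h_1^{(k)}$ denote the value of $h_1$ computed from the cyclically shifted sequence $(x_{k+1}, x_{k+2}, \ldots, x_{k+N})$, with indices read modulo $N$. Using the cyclic convention $x_{j+N} := x_j$ together with $P_N = 1$, one directly checks that $P_k \cdot \sum_{m=0}^{N-1} x_{k+1}\cdots x_{k+m} = \sum_{m=0}^{N-1} P_m$, and therefore $h_1^{(k)} = (1+x_{k+1}) P_k / \bigl(2\sum_m P_m\bigr)$. Summing over $k$ and using $x_{k+1} P_k = P_{k+1}$ (with the wraparound $x_N P_{N-1} = P_N = 1 = P_0$) telescopes the sum:
\[ \sum_{k=0}^{N-1} h_1^{(k)} = \frac{\sum_{k=0}^{N-1}(P_k + P_{k+1})}{2\sum_m P_m} = 1. \]

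Finally, the conditional law of $(B,B')$ given $E$ is uniform on $\{(\beta,\beta')\in\{\pm1\}^{2N} : \sum_j \beta_j = \sum_j \beta_j'\}$, which is manifestly preserved by the cyclic permutation $j\mapsto j+1 \pmod{N}$ of coordinates. Hence the conditional law of $(\mu_j,\nu_j)_{j=1}^N$ is cyclic-shift invariant, so $\E[h_1^{(k)}\mid E] = \E[h_1\mid E]$ for every $k$. Averaging the pointwise identity of the previous paragraph over $k$ therefore gives $\E[h_1\mid E] = 1/N$, i.e., $N Q_N(G(\delta)) = 1$. The main obstacle is spotting the change of variable $x_k = \nu_k/\mu_k$ and the reformulation of $E$ as $P_N = 1$; once these are in hand, the cyclic identity and the distributional invariance combine essentially immediately, and the parity assumption on $N$ plays no role in the argument.
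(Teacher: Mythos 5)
Your proof is correct and follows essentially the same route as the paper's: both arguments rest on the observation that the conditioning forces the product of the site fitness ratios around the cycle to equal one (the paper's $S_N=0$, your $P_N=1$), combined with cyclic-shift invariance of the conditioned environment and the pointwise identity that the $N$ shifted fixation probabilities sum to $1$. The only real difference is notational --- you track the telescoping boundary factors $(1+x_{k+1})$ explicitly, while the paper absorbs them by extending $X_k$ periodically modulo $N$ and working with the walk $S_k$ instead of the products $P_k$.
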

	
	{To rephrase in biological terms, we note that among different realizations of wild type and mutant fitness values, there are cases where mutants experience an overall advantage ($\sum_jB_j<\sum_jB'_j$), an overall disadvantage ($\sum_jB_j>\sum_jB'_j$), or have a fitness configuration whose net sum is equal to that of the wild types, although locally mutants may experience positive or negative selection pressure (the case $\sum_jB_j=\sum_jB'_j$). Theorem \ref{th:ident} states that if we only consider the latter type of environments, mutants will behave exactly as expected in the absence of randomness.  On the other hand, configurations with a net mutant advantage and disadvantage do not balance each other out and result in a positive selection pressure experienced by the mutant.}

	The outline of the remainder of the paper is as follows.  
	In the next section we show how the computation of $\P_N (G(\delta))$
	reduces to computing an expectation of a functional of a random walk. 
	{From here, Theorems \ref{th:main} and \ref{th:better} can heuristically be inferred  replacing the random walk with a corresponding Brownian motion via Donsker's Theorem.  However, the only regime in which Donsker's Theorem applies is that of Theorem \ref{th:main}.}
	{ Using this approach, }we then verify in the case $\delta \sim c N^{-1/2}$ that the expectation 
	commutes with the Brownian scaling limit.  Section~\ref{sec:brownian} 
	computes the corresponding expectations for Brownian motion, based
	on results of Matsumoto, Yor and others.  Section~\ref{sec:proof}
	puts this together to prove Theorem~\ref{th:main}.  We also give the
	relatively brief proof of Theorem~\ref{th:ident}.  Theorem~\ref{th:better}
	is proved in Section~\ref{sec:better}.  This is proved in two stages,
	first when $\delta$ is required to decrease more rapidly than $(\log N)^{-1}$
	and then when this is relaxed to $(\log N)^{-\ee}$.  The final section
	presents some numerical simulations and further questions.
	
	\section{A scaling result} \label{sec:scaling}
	
	The following explicit formula for the probability of a birth and 
	death process started at~1 to reach $N$ before~0 is well known; {we include its short proof for completeness.} 
	
	\begin{prop} \label{pr:BD}
		In a birth and death process, 
		let $p_k$ be the probability of transition to $k+1$ from $k$ and
		let $q_k := 1 - p_k$ be the probability of transition to $k-1$.  
		Let $Q_x$ denote the law of the process starting  $x$ and $\tau_a$
		the hitting time at state $a$.  Then
		\begin{equation} \label{eq:BD}
		Q_1 (\tau_N < \tau_0) 
		= \frac{1}{\sum_{k=0}^{N-1} \prod_{{ j}=1}^k \frac{q_j}{p_j}} \, .
		\end{equation}
		Here, the first term of the sum is the empty product, 
		equal to~1 by convention.
	\end{prop}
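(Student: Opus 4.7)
The plan is to use the classical harmonic function argument for one-dimensional birth and death chains. Let $h(k) := Q_k(\tau_N < \tau_0)$ for $k \in \{0,1,\ldots,N\}$. By the Markov property at the first step, $h$ satisfies the linear recurrence
\begin{equation*}
h(k) = p_k h(k+1) + q_k h(k-1), \qquad 1 \le k \le N-1,
\end{equation*}
with boundary data $h(0) = 0$ and $h(N) = 1$. The goal is to extract $h(1)$ from this recurrence and check that it matches the claimed formula.

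The clean way to handle the recurrence is to study the first differences $d_k := h(k+1) - h(k)$. Using $p_k + q_k = 1$, the recurrence rearranges to $p_k d_k = q_k d_{k-1}$, so
\begin{equation*}
d_k = \frac{q_k}{p_k}\, d_{k-1} = d_0 \prod_{j=1}^{k} \frac{q_j}{p_j},
\end{equation*}
with the $k=0$ product equal to $1$ by the empty-product convention. Summing the differences from $0$ to $N-1$ gives the telescoping identity
\begin{equation*}
1 = h(N) - h(0) = \sum_{k=0}^{N-1} d_k = d_0 \sum_{k=0}^{N-1} \prod_{j=1}^{k} \frac{q_j}{p_j}.
\end{equation*}
Solving for $d_0$ and noting that $h(1) = h(0) + d_0 = d_0$ produces exactly the formula \eqref{eq:BD}.

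The only thing to justify before running this calculation is that $h$ really does satisfy the recurrence at $k=1,\ldots,N-1$, which is immediate from conditioning on the first jump of the chain, together with the fact that $\tau_N \wedge \tau_0 < \infty$ almost surely so that $h$ is well-defined. Since the birth-death chain on $\{0,\ldots,N\}$ with strictly positive $p_k, q_k$ is an irreducible finite chain outside the two absorbing states, absorption occurs in finite time with probability one and $h$ is unambiguously characterized by the recurrence and boundary conditions. There is no substantial obstacle here; the main ``trick'' is simply the reduction to the first-difference recurrence, after which the proof is a one-line telescoping sum.
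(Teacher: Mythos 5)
Your proof is correct, but it takes a different (more elementary) route than the paper. The paper proves this proposition by the electrical-network correspondence: it assigns resistance $\prod_{j=1}^k (q_j/p_j)$ to the edge between $k$ and $k+1$, observes that the resulting network random walk is exactly the given birth--death chain, and reads off $Q_1(\tau_N<\tau_0)$ as the ratio of the effective conductance from $1$ to $N$ to the sum of the conductances from $1$ to $N$ and from $1$ to $0$. You instead run the classical first-step (harmonic function) analysis: set $h(k)=Q_k(\tau_N<\tau_0)$, derive $p_k d_k = q_k d_{k-1}$ for the differences $d_k = h(k+1)-h(k)$, and telescope. The two arguments encode the same linear system --- the network resistances are precisely your ratios $d_k/d_0$ --- so the difference is one of packaging. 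Your version is self-contained and requires no appeal to the network/random-walk dictionary, at the cost of a few more lines; the paper's version is a one-liner once that dictionary is taken as known. Your handling of well-definedness (almost sure absorption for a finite chain with $p_k,q_k>0$) and of the boundary conditions is fine, and the computation $h(1)=d_0$ matches \eqref{eq:BD} exactly.
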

	\begin{proof}
		Consider the network in which the resistance between $k$ and $k+1$ is $\prod_{j = 1}^k (q_j/p_j)$.  Note then that the random walk on this network is equivalent to that described in the Proposition.  The expression for $Q_1$ is the 
		ratio of the conductance  $1$ to $N$ to that plus the conductance
		$1$ to $0$.  \end{proof}
	
	We now show that the denominator is close to a functional of a random 
	walk, which is close to a functional of a Brownian motion, and that
	these approximations are good enough to pass expectations to the limit.
	
	For $1 \leq k \leq N-1$ denote
	\begin{eqnarray*}
		X_k & := & \log \frac{q_k}{p_k} = \log \frac{1 - \beta_k}{\beta_{k+1}} \\
		\Xt_k & := & \log \frac{1 - \beta_{k+1}}{\beta_{k+1}} 
		= \log \frac{\nu_{k+1}}{\mu_{k+1}} 
	\end{eqnarray*}
	with partial sums $S_k := \sum_{j=1}^k X_j$ and likewise for $\St$.
	{ Define $S_0 = \St_0 = 0$.}
	The definition of $X_k$ is chosen so that Equation~\eqref{eq:BD} 
	becomes 
	\begin{equation} \label{eq:approx}
	\P_N (\win(\delta)) = \E \frac{1}{\sum_{k=0}^{N-1} \exp (S_k)} \, .
	\end{equation}
	On the other hand $\Xt_k$ are chosen so that $\{ \St_k \}$ is 
	precisely a simple random walk on the lattice $\delta' \Z$, 
	with holding probability $1/2$, where 
	$$\delta' := \log \frac{1+\delta}{1 - \delta} = 2 \delta + O(\delta^2) \, .$$
	{ For $\delta \leq 1 - \eps$, $\mu_j$ and $\nu_j$ are uniformly bounded away 
		$0$ and $1$ and so $\beta_j$ is bounded away  $1$ as well.  Thus }
	\begin{equation} \label{eq:delta}
	|\St_k - S_k| = |\log (1 - \beta_{k+1}) - \log (1 - \beta_1)|
	\leq C_\ee \delta
	\end{equation}
	as long as $\delta \leq 1 - \ee$ { by applying Taylor's theorem
		with remainder to $\log (1 - \beta_{k+1})$ and $\log(1 - \beta_1)$ as a function of $\delta$.} 
	Donsker's theorem { then} gives
	\begin{equation} \label{eq:donsker}
	(S_{\lfloor t N \rfloor})_{t \in (0,1)} \xrightarrow{N \to \infty} 
	\sqrt{2} \cdot c \cdot {(B(t))_{t \in (0,1)}}
	\end{equation}
	in the c\`adl\`ag topology whenever $\delta \sqrt{N} \to c$, 
	where $B(t)$ is Brownian motion.  
	
	In a moment we will show
	\begin{lemma} \label{lem:UI}
		Suppose $\delta$ and $N$ vary so that $\delta \sqrt{N}$
		remains bounded away  zero and infinity.  Then the random variables 
		$\{N  Q_1(\tau_N < \tau_0) \}$ are uniformly integrable.  {Further, if $\delta \sqrt{N} \to c$ then $$N \P_N(\win(\delta)) \to 
			\E \left[ \frac{1}{\int_{0}^1 \exp(\sqrt{2} c B_s) \,ds } \right]$$
			where $(B_s)$ is standard Brownian motion.}
	\end{lemma}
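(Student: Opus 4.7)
Starting from~\eqref{eq:approx} we have $N\P_N(\win(\delta)) = \E[1/A_N]$ where
$$A_N := \frac{1}{N}\sum_{k=0}^{N-1} \exp(S_k) = \int_0^1 \exp\bigl(S_{\lfloor tN \rfloor}\bigr)\, dt.$$
The plan is to (a) identify the in-distribution limit of $A_N$ when $\delta\sqrt{N}\to c$, (b) prove the uniform integrability of $\{1/A_N\}$ whenever $\delta\sqrt{N}$ stays bounded away from $0$ and $\infty$, and (c) combine (a) and (b) to obtain the stated convergence of expectations.

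For~(a), Donsker's theorem~\eqref{eq:donsker} gives $(S_{\lfloor tN\rfloor})_{t\in[0,1]} \Rightarrow (\sqrt{2}\, c\, B(t))_{t\in[0,1]}$ in the Skorokhod topology, and since the limit is continuous this upgrades---after passing to a Skorokhod representation---to uniform convergence on $[0,1]$ almost surely. Bounded convergence then yields $A_N \to \int_0^1 \exp(\sqrt{2}\,c\,B_s)\,ds$ in distribution, and the continuous mapping theorem applied to $x\mapsto 1/x$ on $(0,\infty)$ gives the corresponding distributional convergence of $1/A_N$ to the quantity on the right of the lemma statement.

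The main obstacle is~(b).  The key deterministic bound is $A_N \geq \exp\bigl(\min_{0\le k\le N-1} S_k\bigr)$, hence $1/A_N \leq \exp\bigl(\max_{0\le k\le N-1}(-S_k)\bigr)$.  By~\eqref{eq:delta} we may replace $S_k$ by $\St_k$ at the cost of the uniformly bounded multiplicative factor $e^{C_\ee\delta}$.  The walk $\St_k$ is a mean-zero martingale with increments bounded by $\delta' = 2\delta + O(\delta^2)$, so Doob's maximal inequality applied to the exponential submartingale $e^{-t\St_k}$, together with the standard Hoeffding bound $\E[e^{-t\St_{N-1}}] \leq e^{t^2(N-1)(\delta')^2/2}$ and optimization over $t$, yields the sub-Gaussian tail
$$\P\bigl(\max_{0\le k\le N-1}(-\St_k) \geq \lambda\bigr) \leq \exp\!\left(-\frac{\lambda^2}{2(N-1)(\delta')^2}\right).$$
Since the hypothesis forces $N(\delta')^2$ to remain bounded, integrating this tail gives $\sup_N \E\bigl[\exp(p\max_k(-\St_k))\bigr] < \infty$ for every $p\geq 1$, and therefore $\sup_N \E[(1/A_N)^p] < \infty$, which is far more than enough for uniform integrability.

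Step~(c) is then immediate: the distributional convergence from~(a) together with the uniform $L^p$ bound from~(b) yields convergence of expectations to the value $\E\bigl[1/\int_0^1 \exp(\sqrt{2}\,c\,B_s)\,ds\bigr]$.  The only delicate step is the concentration estimate in~(b); once the deterministic domination $1/A_N \leq \exp(\max_k(-S_k))$ is in hand, the remaining pieces---Donsker, Skorokhod representation, continuous mapping, and UI implies $L^1$ convergence---are all routine.
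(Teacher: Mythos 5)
Your proposal is correct, and its skeleton (reduce to $\E[1/A_N]$ via~\eqref{eq:approx}, get distributional convergence from Donsker, and prove uniform integrability by controlling the running minimum of the walk) matches the paper's. The one place where you take a genuinely different route is the uniform integrability estimate, which is the heart of the lemma. The paper localizes: it bounds $N/\sum_{j}\exp(\St_j)$ by $\eps^{-1}\exp(-\min_{j\le \eps N}\St_j)$ using only the first $\eps N$ terms, then chooses $\eps = e/x$ depending on the level $x$, so that the reflection-principle/Hoeffding bound over the shortened window gives a genuinely exponential tail $\P[1/A_N > x] \le 2e^{-Cx}$. You instead dominate $1/A_N$ by $\exp(\max_k(-S_k))$ over the whole walk and apply the Doob--Hoeffding exponential maximal inequality; since $N(\delta')^2$ stays bounded this yields a log-normal-type tail $\exp(-c(\log x)^2)$ and hence uniform $L^p$ bounds for every $p$, which is more than enough for UI. Your version is arguably cleaner and gives a stronger conclusion (all moments bounded uniformly, matching the fact that the Brownian limit $1/\int_0^1 e^{\sqrt 2 c B_s}\,ds$ has all moments), at the cost of a weaker tail exponent; the paper's version gives the sharper exponential tail but only integrates it once. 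You are also somewhat more careful than the paper in step~(a), spelling out the Skorokhod representation and continuous-mapping steps needed to pass from the functional limit~\eqref{eq:donsker} to convergence in distribution of $1/A_N$; just make sure the constant-factor correction $e^{C_\ee\delta}$ from~\eqref{eq:delta} is carried through both (a) and (b), as you indicate.
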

	
	{ The second half of Lemma \ref{lem:UI} is the first part of Theorem \ref{th:main} and follows  
		uniform integrability together with~\eqref{eq:BD} and~\eqref{eq:donsker}.  This is because
		convergence of means follows  
		uniform integrability together with convergence in distribution.}
	%
	\iffalse
	\begin{theorem}[scaling limit] \label{th:scaling}
		Fix $c > 0$ and suppose $N \to \infty$ and $\delta \sqrt{N} \to c$.  Then
		$$N \P_N(\win(\delta)) \to 
		\E \left[ \frac{1}{\int_{0}^1 \exp(\sqrt{2} c B_s) \,ds } \right]$$
		where $(B_s)$ is standard Brownian motion.
		$\Cox$
	\end{theorem}
	\fi

	\noindent{\sc Proof of Lemma}~\ref{lem:UI}: 
	{ A consequence of~\eqref{eq:delta} is that
		$$\left|\frac{\sum_{j = 0}^{N-1} \exp(\St_j)}{\sum_{j=0}^{N-1} 
			\exp(S_j)} -1 \right| = o(1) \, .$$ 
		It therefore suffices to show that the variables 
		$$\frac{N}{\sum_{j = 0}^{N-1} \exp(\St_j)}$$
		are uniformly integrable.}

	For a simple random walk, the reflection principle gives
	\begin{eqnarray*}
		\P[\min_{j \leq n} S_j \leq -r ] & = &
		\P[S_n = -r  ] + 2 \P[S_n < -r] \\
		& \leq & 2 \P[S_n \leq -r]   \\
		& \leq & { 2 \exp\left( - 2r^2 / n \right)}
	\end{eqnarray*}
	{ where the last bound is by Hoeffding's inequality.}
	Because $\{ \St_k \}$ is a simple random walk scaled by $\delta'$
	and holding with probability $1/2$, for all $k \in \Z^+$ and $t > 0$,
	\begin{equation} \label{eq:LD}
	\P[\min_{j \leq k} \St_{j} < -t] \leq 2 
	\exp\left(-\frac{ { 2}t^2}{\left(\log\left(\frac{1 + \delta}{1 - \delta} 
		\right)\right)^2 k } \right) \, .
	\end{equation}
	
	Applying~\eqref{eq:LD} shows that
	\begin{equation}\label{eq:walk-tail-bound}
	\P[\min_{k \leq N\eps} \St_k \leq -1 ] \leq 2 \exp\left(- B / \eps \right)
	\end{equation} 
	for $B$ depending continuously on $\delta \sqrt{N}$.  Thus { for $\eps \in (0,1]$}
	$$\frac{N}{\sum_{j = 0}^{N-1} \exp(\St_j)} \leq 
	\frac{N}{\sum_{j = 0}^{\eps N} \exp(\St_j) } \leq 
	\frac{1}{\eps \exp\left(\min_{j \leq \eps N} \St_j \right)}\,.$$  
	{ Thus, for $x$ large enough and picking $\eps = e/x$ we have $$\P\left[\frac{N}{\sum_{j = 0}^{N-1} \exp(\St_j)} > x \right] \leq \P\left[\frac{1}{\eps \exp\left(\min_{j \leq \eps N} \St_j \right)} > x \right] = \P\left[ \min_{j \leq (eN/x)}\St_j < -1 \right]\,.$$ }
	We have, for each { $K \geq e$}, 
	\begin{align*} \E\left[ \frac{N}{\sum_{j = 0}^{N-1}\exp(\St_j)  }  
	\one\left\{ \frac{N}{\sum_{j=0}^{N-1} \exp(\St_j) } {>} K \right\}  \right]
	&= \int_{x \geq K} \P\left[ \frac{N}{\sum_{j = 0}^{N-1} \exp(\St_j) } {>} x 
	\right] \,dx\\
	&\leq \int_{x \geq K} \P\left[ \min_{j \leq (e N / x)} \St_j  
	{<} -1 \right] \,dx \\
	&\leq \int_{x \geq K} 2 \exp(-C x / e) \,dx\,.
	\end{align*}
	This inequality holds for all $N$ and converges to zero as 
	$K \to \infty$, thereby showing uniform integrability.  
	$\Cox$
	
	\section{Evaluation of the Brownian integral} \label{sec:brownian}
	{ Define the following functions of Brownian motion}:
	\begin{eqnarray}
	A_\alpha (t) & := & \int_0^t e^{\alpha B_s} \, ds \label{def:At} \; ; \\
	m_\alpha (t) & := & \E A_\alpha (t)^{-1} \, .\label{def:m}
	\end{eqnarray}
	In this notation, Lemma~\ref{lem:UI} proves the first statement
	of Theorem~\ref{th:main} with
	\begin{equation} \label{eq:g} 
	g(c) := m_{c \sqrt{2}} (1) \, .
	\end{equation}
	To finish the proof of Theorem~\ref{th:main}, it remains to 
	evaluate~\eqref{eq:g}.  Expectations such as the one in~\eqref{def:m} 
	have been well studied.
	
	\begin{prop}[\protect{\cite{MatsumotoYor2005}}] \label{prop:MY}
		Let $\{ B_t : t \geq 0 \}$ be a standard Brownian motion and
		let $A(t) := \int_0^t e^{2 B_s} \, ds$.  Then,
		\begin{eqnarray}
		\E \left [ A_2 (t)^{-1} | B_t = x \right ] & = & \frac{x e^{-x}}{t \sinh x}
		\mbox{ if } x \neq 0 \label{eq:MY1} \, ; \\
		\E \left [ A_2 (t)^{-1} | B_t = 0 \right ] & = & t^{-1} \label{eq:MY2} \, ; \\
		m_2 (t) & \sim & \sqrt{\frac{2}{\pi t}} 
		\mbox{ as } t \to \infty \label{eq:MY3} \, .
		\end{eqnarray}
	\end{prop}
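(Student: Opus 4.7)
Proposition \ref{prop:MY} bundles two classical identities of Matsumoto and Yor with an asymptotic consequence, and I would treat the pieces separately. Identities (\ref{eq:MY1})--(\ref{eq:MY2}) belong to the theory of exponential functionals of Brownian motion, and my plan is to cite them rather than to reprove them; for the record, the standard route goes through Lamperti's time-change $e^{B_s} = R(A_2(s))$ for a Bessel process $R$, combined with the Hartman--Watson density of $A_2(t)$ conditional on $B_t$. An alternative approach uses Bougerol's identity $\sinh(B_t) \stackrel{d}{=} \tilde{\beta}(A_2(t))$ with $\tilde{\beta}$ an independent Brownian motion, from which conditioning on $B_t = x$ and inverting the Gaussian with variance $A_2(t)$ yields the factor $x/\sinh(x)$ after careful bookkeeping. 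Identity (\ref{eq:MY2}) then follows from (\ref{eq:MY1}) by continuity, using $\frac{xe^{-x}}{\sinh x} \to 1$ as $x \to 0$ (l'Hopital).

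The substantive step I would carry out is the derivation of the asymptotic (\ref{eq:MY3}) from (\ref{eq:MY1})--(\ref{eq:MY2}). First I would simplify $\frac{xe^{-x}}{\sinh x} = \frac{2x}{e^{2x}-1}$, so that the two conditional-expectation formulas combine into a single expression $\E[A_2(t)^{-1} \mid B_t = x] = \frac{2x}{t(e^{2x}-1)}$, valid at $x = 0$ as well. Integrating against the Gaussian density of $B_t$ and substituting $x = y\sqrt{t}$ turns $m_2(t)$ into
\begin{equation*}
m_2(t) = \frac{1}{\sqrt{2\pi t}} \int_{-\infty}^\infty \frac{2y}{e^{2y\sqrt{t}}-1}\, e^{-y^2/2}\, dy.
\end{equation*}
As $t \to \infty$ the integrand vanishes exponentially in $\sqrt{t}$ for $y > 0$, while rewriting $\frac{2y}{e^{2y\sqrt{t}}-1} = \frac{2|y|}{1 - e^{-2|y|\sqrt{t}}}$ for $y < 0$ shows monotone pointwise convergence to $2|y|\, e^{-y^2/2}$ there. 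A uniform-in-$t$ envelope of the form $C(1 + |y|)\, e^{-y^2/2}$ (using $e^x - 1 \geq x$ for $x > 0$ to bound the $y > 0$ tail and the $t = 1$ bound on the $y < 0$ side) justifies dominated convergence, yielding $\int_{-\infty}^\infty \to \int_0^\infty 2u\, e^{-u^2/2}\, du = 2$, and hence $m_2(t) \sim \sqrt{2/(\pi t)}$.

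The only real obstacle is the Matsumoto--Yor identity (\ref{eq:MY1}) itself, which is a non-trivial piece of stochastic analysis; everything else is a short dominated-convergence argument applied to an explicit one-dimensional integral, which I would present in full.
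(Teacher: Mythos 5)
Your proposal is correct and takes essentially the same route as the paper: the two conditional-expectation identities are quoted from Matsumoto--Yor, and the asymptotic~\eqref{eq:MY3} is obtained by integrating~\eqref{eq:MY1} against the $N(0,t)$ density, rescaling $x = y\sqrt{t}$, and passing to the pointwise limit $2|y|\one_{y<0}$ of the kernel to get $\sqrt{2/(\pi t)}$. The only cosmetic differences are that you rewrite $x e^{-x}/\sinh x$ as $2x/(e^{2x}-1)$ and justify the limit interchange with an explicit dominating envelope where the paper says ``truncating, integrating and taking limits.''
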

	\iffalse
	\noindent{\sc Proof:} The first two are proved as Matsumoto and 
	Yor~{\cite[Proposition~5.9]{MatsumotoYor2005}}.  %The third is stated
	%as~(5.8) there, cited ~\cite{HariyaYor2004}, but there is a typo 
	%(a parameter $\alpha$ is introduced without a definition).  
	{We quickly derive the third by integrating}~\eqref{eq:MY1}.  Letting $y := x / \sqrt{t}$,
	\begin{eqnarray*}
		m_2 (t) & = & t^{-1} \int \frac{x e^{-x}}{\sinh (x)} \, dN(0,t) (x) \\
		& = & t^{-1/2} \int \frac{y e^{-\sqrt{t} y}}{\sinh (\sqrt{t} y)}
		\, dN(0,1) (y) \, .
	\end{eqnarray*}
	As $t \to \infty$, the quantity $y e^{-\sqrt{t} y} / \sinh (\sqrt{t} y)$
	converges pointwise to $2|y| \one_{y < 0}$.  Truncating, integrating
	and taking limits gives
	$$t^{1/2} m_2 (t) \to \int_{-\infty}^0 2 |y| \, dN(0,1) (y) 
	= \E |N(0,1)| = \sqrt{\frac{2}{\pi}} \, .$$
	$\Cox$
	\fi
	
	The next lemma uses Brownian scaling to transfer these results 
	to the $-1$ moment of $\int_0^1 e^{\alpha B_s} \, ds$. 

	\begin{lemma} \label{lem:MY rescaled}
		For $\alpha , \nu , t > 0$,
		\begin{equation} \label{eq:alpha}
		m_\alpha (t) = \frac{\alpha^2}{\nu^2} 
		m_\nu \left ( \frac{\alpha^2}{\nu^2} t \right ) \, .
		\end{equation}
		It follows that
		\begin{equation} \label{eq:m}
		m_\alpha (1) \sim \frac{\alpha}{\sqrt{2 \pi}} \mbox{ as } 
		\alpha \to \infty \, .
		\end{equation}
	\end{lemma}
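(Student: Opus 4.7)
The plan is to prove the scaling identity \eqref{eq:alpha} via a direct Brownian rescaling, and then to derive the asymptotic \eqref{eq:m} by specializing to $\nu = 2$ and invoking Proposition \ref{prop:MY}. There is essentially no obstacle here: the identity is a one-line change of variables once one remembers that $B_{cs}$ and $\sqrt{c}\, B_s$ agree in distribution as processes.

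Concretely, I would fix $\alpha,\nu > 0$ and substitute $s = (\nu^2/\alpha^2) u$ in the integral defining $A_\alpha(t)$, giving
\begin{equation*}
A_\alpha(t) = \int_0^t e^{\alpha B_s}\,ds = \frac{\nu^2}{\alpha^2} \int_0^{\alpha^2 t/\nu^2} e^{\alpha B_{(\nu^2/\alpha^2) u}}\,du.
\end{equation*}
By Brownian scaling, the process $\{(\alpha/\nu) B_{(\nu^2/\alpha^2) u}\}_{u \geq 0}$ is a standard Brownian motion, so the integrand $e^{\alpha B_{(\nu^2/\alpha^2) u}}$ equals in distribution (jointly in $u$) $e^{\nu \widetilde{B}_u}$ for some standard Brownian motion $\widetilde{B}$. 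Therefore
\begin{equation*}
A_\alpha(t) \stackrel{d}{=} \frac{\nu^2}{\alpha^2}\, A_\nu\!\left(\frac{\alpha^2}{\nu^2} t\right),
\end{equation*}
and taking the expectation of the reciprocal proves \eqref{eq:alpha}.

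For the asymptotic \eqref{eq:m}, I would take $\nu = 2$ and $t = 1$ in \eqref{eq:alpha}, obtaining $m_\alpha(1) = (\alpha^2/4)\, m_2(\alpha^2/4)$. Applying \eqref{eq:MY3} from Proposition \ref{prop:MY}, namely $m_2(T) \sim \sqrt{2/(\pi T)}$ as $T \to \infty$, with $T = \alpha^2/4$, yields
\begin{equation*}
m_\alpha(1) \sim \frac{\alpha^2}{4} \cdot \sqrt{\frac{8}{\pi \alpha^2}} = \frac{\alpha}{\sqrt{2\pi}},
\end{equation*}
as required. The only care needed is to make sure the almost sure version of the scaling identity is used before passing to expectations, so that no integrability issue is hidden; since $A_\alpha(t)^{-1}$ has finite expectation for each fixed $\alpha,t>0$ by Proposition \ref{prop:MY} combined with \eqref{eq:alpha} itself, this is automatic.
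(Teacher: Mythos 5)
Your proof is correct and follows essentially the same route as the paper: both arguments rest on the Brownian scaling identity $A_\alpha(t) \stackrel{d}{=} (\nu^2/\alpha^2) A_\nu(\alpha^2 t/\nu^2)$ (the paper derives it via the density of $A_\alpha(t)^{-1}$, you derive it directly as an almost-sure change of variables, which is marginally cleaner), and then both specialize to $\nu=2$, $t=1$ and invoke \eqref{eq:MY3}. No gaps.
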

	
	{Both proofs are straightforward although somewhat technical, and so we defer them to Appendix \ref{sec:scaling-proofs}.}
	\section{Proofs of Theorems~\protect{\ref{th:main}} 
		and~\protect{\ref{th:ident}}} \label{sec:proof}
	
	\noindent{\sc Proof of Theorem~\ref{th:main}:}
	We have already evaluated $g$.  Continuity and strict monotonicity 
	will follow  {computing the second derivative of $\phi(x) = x e^{-x}/\sinh(x)$ explicitly}.  
	The estimate~\eqref{eq:c=infty} follows immediately ~\eqref{eq:g} 
	and~\eqref{eq:m}.  It remains to prove~\eqref{eq:c=0}, that is,
	to estimate $g(c) = m_{c \sqrt{2}} (1)$ near $c=0$.  
	Integrating~\eqref{eq:MY1} gives
	$$t \, m_2 (t) = \int \frac{x e^{-x}}{\sinh (x)} \, dN(0,t) (x) \, .$$
	Plugging in $x e^{-x} / \sinh (x) = 1 - x + x^2 / 3 + O(x^3)$
	gives
	$$t \, m_2 (t) = 1 + \frac{t}{3} + O(t^{3/2})$$
	as $t \downarrow 0$.  Using~\eqref{eq:alpha} with $\alpha = c \sqrt{2}$ 
	and $\nu = 2$ then gives
	$$g(c) = \frac{c^2}{2} m_2 \left ( \frac{c^2}{2} \right )
	= 1 + \frac{c^2 + o(1)}{6} , \, $$
	proving~\eqref{eq:c=0}.
	$\Cox$
	\iffalse
	\begin{lemma} \label{lem:convex}
		The function $\phi (x) := x e^{-x} / \sinh (x)$ is strictly convex.
		It follows that $\int \phi(x) \, d N(0,t) (x)$ is strictly increasing
		in $t$.
	\end{lemma}
	\fi
	\noindent{\sc Proof:} We show that $\phi''(x) > 0$ for $x \neq 0$.
	Computing, $\phi''(x) = e^{-x} (\sinh (x))^{-3} h(x)$ where
	$$h(x) = x + 1 + (x-1) e^{2x} \, .$$
	Because we have taken out a factor of the same sign as $x$, we need to 
	show that $h$ is positive on $({0},\infty)$ and negative on $(-\infty , 0)$.  
	Verifying first that $h(0) = h'(0) = 0$, the proof is concluded by
	observing that $h'(x)$ has a unique minimum at $x=0$, because 
	$h''(x) = 4x e^{2x}$ has the same sign as $x$.
	$\Cox$
	
	\noindent{\sc Proof of Theorem~\ref{th:ident}:}
	Extend the definition of $X_k$ 
	by reducing modulo $N$, thus $X_N := \log (1 - \beta_N) - \log \beta_1$
	and so forth.  This makes the sequence $\{ X_k : k \geq 1 \}$ periodic
	and shift invariant, that is, $(X_1, \ldots , X_{N-1} , X_N) \diseq 
	(X_2 , \ldots , X_N , X_1)$.  Observe also that $Q_N (S_N = 0) = 1$ 
	because $S_N = \sum_{j=1}^N \log (1 - \beta_j) - \sum_{j=1}^N \log \beta_j$
	and the multiset of $N$ values of $\beta_j$ is the same as the multiset
	of $N$ values of $1 - \beta_j$.  { This implies that for each $k \geq 0$ we have $S_{k+N} = S_k$ and so $$\sum_{j = 0}^{N-1} \exp(S_j) = \sum_{j = 0}^{N-1} \exp(S_{j +k}),.$$  By shift invariance of $(X_1,\ldots,X_N)$, we may shift this sequence $k$ times to shift the sequence $(S_k,\ldots,S_{k+N})$ to $(S_{k} - S_k, \ldots, S_{k+N} - S_k)$\,.  In particular, this shows that $$ \E \frac{1}{\sum_{j = 0}^{N-1} \exp(S_j)} =\E  \frac{1}{\sum_{j=0}^{N-1} \exp(S_{j+k})} = \E  \frac{1}{\sum_{j=0}^{N-1} \exp(S_{j} - S_k)} = \E \frac{S_k}{\sum_{j =0}^{N-1} \exp(S_j)}\,. $$  Averaging over all $0 \leq k \leq N-1$ shows 
		$$
		\E \frac{1}{\sum_{j = 0}^{N-1} \exp(S_j)} = N^{-1} \sum_{k = 0}^{N-1}\E \frac{S_k}{\sum_{j =0}^{N-1} \exp(S_j)} = N^{-1}
		$$}
	%
	%By shift invariance, 
	%\begin{equation} \label{eq:cyclic}
	%\E f(X_1, \ldots , X_N) = N^{-1} \E \sum_{k=1}^N %f(X_{k+1} , \ldots , 
	%   X_{k+n}) \, .
	%\end{equation}
	%Shifting the $X$ sequence by $k$ shifts the $S$ %sequence to
	%$(S_{k+1} - S_k , \ldots , S_{k+N} - S_k)$, in other words,
	%$S_k$ is subtracted  each terms and the terms are cyclically 
	%permuted.  This turns $1 / \sum_{j=0}^{N-1} \exp(S_j)$ into 
	%$\exp (S_k) / \sum_{{ j}=0}^{N-1} \exp (S_j)$.  Plugging into~\eqref{eq:cyclic}
	%and summing gives
	%$$\E \frac{1}{\sum_{j=0}^{N-1} \exp (S_j)} = N^{-1} \E \sum_{k=0}^{N-1}
	%   \frac{\exp(S_k)}{\sum_{k=0}^{N-1} \exp (S_k)} = N^{-1} \, $$
	proving Theorem~\ref{th:ident}.
	$\Cox$
	
	\section{Proof of Theorem~\protect{\ref{th:better}}} \label{sec:better}
	
	\subsection{KMT Coupling and Preliminaries}
	
	A key result for studying this larger regime $\delta$ is coupling
	of random walk to Brownian motion.
	\begin{lemma}[KMT-Coupling] \label{lem:KMT}
		Let {$\{X_k\}_{k\geq 0}$} be a simple random walk with i.i.d. increments $\xi$ 
		so that $\E[\xi] = 0, \E[\xi^2] = 1$ and $\E[e^{t|\xi|}] < \infty$ 
		for $t$ sufficiently small.  Extend $X_k$ to continuous time by 
		defining $X_t = X_{\lfloor t \rfloor}$.  Then there exists a constant 
		$C$ so that for all $T > e$ there is a coupling so that 
		$$\P\left[\sup_{t \in [0,T]} |X_t - B_t| \geq C \log(T)  \right] 
		\leq \frac{1}{T^2}$$
		where $B_t$ is standard Brownian motion.
	\end{lemma}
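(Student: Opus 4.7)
The plan is to invoke the classical Komlós--Major--Tusnády (KMT) strong approximation theorem and then reduce the stated continuous-time bound to its standard discrete-time form. The KMT theorem, under precisely the moment hypotheses assumed in the lemma, provides constants $C_0, K_0, \lambda > 0$ and, for each $n$, a coupling of $X_n = \sum_{k \leq n} \xi_k$ with a standard Brownian motion $B$ on a common probability space such that
\[
\P\bigl[\max_{k \leq n} |X_k - B_k| > C_0 \log n + x\bigr] \leq K_0 e^{-\lambda x}
\]
for all integers $n \geq 1$ and all $x \geq 0$.

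First, I would apply this estimate at $n = \lceil T \rceil$ with $x := (3 \log T + \log(2 K_0))/\lambda$, so that the right-hand side is at most $(2T^{3})^{-1} \leq T^{-2}/2$ for $T > e$. This yields
\[
\max_{k \leq \lceil T \rceil} |X_k - B_k| \leq C_1 \log T
\]
off an event of probability at most $T^{-2}/2$, where $C_1$ depends only on the KMT constants and the distribution of $\xi$.

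Next, I would handle the discrete-to-continuous interpolation. Since $X_t = X_{\lfloor t \rfloor}$, the triangle inequality gives
\[
\sup_{t \in [0,T]} |X_t - B_t| \leq \max_{k \leq \lceil T \rceil} |X_k - B_k| + \max_{k < \lceil T \rceil} \sup_{t \in [k, k+1]} |B_t - B_k|.
\]
By Brownian scaling, each supremum $\sup_{t \in [k,k+1]} |B_t - B_k|$ has the law of $\sup_{s \in [0,1]} |B_s|$, whose tail satisfies $\P[\sup_{s \in [0,1]} |B_s| > y] \leq 4 e^{-y^2/2}$ via the reflection principle. A union bound over the at most $\lceil T \rceil$ unit intervals then shows that the second term is bounded by $C_2 \sqrt{\log T}$ except on an event of probability at most $T^{-2}/2$, for $C_2$ sufficiently large. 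Since $\sqrt{\log T} = o(\log T)$, choosing $C := C_1 + C_2$ large enough ensures that $C \log T$ dominates the sum of the two terms for every $T > e$; a final union bound combines the two exceptional events and yields the claim.

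The main obstacle, conveniently outsourced here, is of course the KMT construction itself, whose proof via the Hungarian dyadic conditional quantile coupling is highly nontrivial. Given that statement as a black box, the continuous-time extension and the bookkeeping of constants are a routine exercise in union bounds and Gaussian tails.
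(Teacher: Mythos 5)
Your argument is correct, but it takes a different route from the paper: the paper's proof of this lemma is a pure citation --- the continuous-time bound is quoted verbatim as equation~(17) of Aurzada--Dereich, with a weaker stretched-exponential version from Lawler--Limic noted as an alternative --- whereas you start from the classical discrete-time KMT inequality $\P[\max_{k \leq n}|X_k - B_k| > C_0\log n + x] \leq K_0 e^{-\lambda x}$ and carry out the discrete-to-continuous interpolation yourself. Your two-step decomposition is sound: the choice $x = (3\log T + \log(2K_0))/\lambda$ does make the KMT tail at most $T^{-2}/2$ for $T>e$, and the oscillation term $\max_{k}\sup_{t\in[k,k+1]}|B_t - B_k|$ is indeed $O(\sqrt{\log T})$ off an event of probability $O(T^{-2})$ by the Gaussian tail of $\sup_{[0,1]}|B|$ and a union bound over the $\lceil T\rceil$ unit intervals (one should take the constant $C_2$ a bit larger than the minimal choice so the bound also holds for $T$ just above $e$, but that is cosmetic). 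Since $\log T > 1$ on the stated range, $\sqrt{\log T} \leq \log T$ and the two error terms combine into a single $C\log T$. What your approach buys is self-containedness modulo the discrete KMT theorem, which is the form most standard references state; what the paper's approach buys is brevity and, via the cited source, the stronger stretched-exponential tail in place of $T^{-2}$, which it records in~\eqref{eq:stretched} and later uses in the proof of Lemma~\ref{lem:RW}. Your derivation would also yield that stronger tail if you let $x$ grow proportionally to $\log T$ with a larger constant, so nothing downstream is lost.
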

	\begin{proof}
		This is stated as equation~(17) in~\cite{aurzada-dereich}.
		In fact one can replace $T^{-2}$ by a stretched exponential;
		a slightly weaker result that may be quoted 
		~\cite[Theorem~7.1.1]{lawler-limic} is that for any $\alpha > 0$
		there is a $C_\alpha$ such that 
		\begin{equation} \label{eq:stretched}
		\P\left[\sup_{t \in [0,T]} |X_t - B_t| \geq C_\alpha \log(T)  \right] 
		\leq T^{-\alpha} \ .
		\end{equation} 
	\end{proof}
	
	Another basic lemma is the well-known uniform estimate for random 
	walk hitting probabilities { whose proof is standard and so we prove it in
		Appendix \ref{app:better-proofs}.}
	
	\begin{lemma} \label{lem:RW}
		Let {$\{ S_n \}_{n \geq 0}$} be a random walk whose IID increments $\{ X_n \}$ 
		satisfy the hypotheses of the KMT coupling.  Then, if $u$ ranges 
		over $[M^\ee , M^{1/2 - \ee}]$ for some $\ee \in (0,1/2)$, 
		$$\P (\max_{1 \leq j \leq M} S_j \leq u) \sim \sqrt{\frac{2}{\pi}} 
		\frac{u}{M^{1/2}}$$
		as $M \to \infty$, uniformly in $u$ and the random walk.
	\end{lemma}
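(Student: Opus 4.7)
The plan is to transfer the classical exact hitting probability of Brownian motion to the random walk via the KMT coupling provided by Lemma \ref{lem:KMT}. On the Brownian side, the reflection principle gives, for $u > 0$,
\[
\P\bigl(\max_{0 \leq t \leq M} B_t \leq u\bigr) = 1 - 2\P(B_M \geq u) = 2\Phi(u/\sqrt{M}) - 1,
\]
and since $u/\sqrt{M} \leq M^{-\ee}$ tends to $0$ uniformly in the range of interest, a Taylor expansion of $\Phi$ about $0$ (using $\Phi'(0) = 1/\sqrt{2\pi}$ and $\Phi''(0) = 0$) yields
\[
\P\bigl(\max_{0 \leq t \leq M} B_t \leq u\bigr) = \sqrt{\tfrac{2}{\pi}}\,\frac{u}{\sqrt{M}}\,\bigl(1 + O(u^2/M)\bigr) = \sqrt{\tfrac{2}{\pi}}\,\frac{u}{\sqrt{M}}\,\bigl(1 + O(M^{-2\ee})\bigr),
\]
uniformly in $u \in [M^\ee, M^{1/2-\ee}]$.

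Next I would apply the stretched-exponential form \eqref{eq:stretched} of KMT with, say, $\alpha = 1$ to couple the piecewise-constant extension $S_t = S_{\lfloor t \rfloor}$ to a standard Brownian motion $B_t$ satisfying $\P(E) \leq M^{-1}$, where $E := \{\sup_{t \leq M} |S_t - B_t| > C \log M\}$. Off $E$ one has the sandwich
\[
\bigl\{\max_{t \leq M} B_t \leq u - C \log M\bigr\} \subseteq \bigl\{\max_{j \leq M} S_j \leq u\bigr\} \subseteq \bigl\{\max_{t \leq M} B_t \leq u + C \log M\bigr\},
\]
so the random-walk probability is squeezed between the two Brownian probabilities at $u \pm C\log M$, up to additive error $M^{-1}$. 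Since $u \geq M^\ee$, the perturbation $C\log M$ is $o(u)$, so the Brownian formula applied at $u \pm C\log M$ still produces $\sqrt{2/\pi}\,(u/\sqrt{M})(1+o(1))$; and since $u/\sqrt{M} \geq M^{\ee-1/2}$ with $\ee < 1/2$, the $M^{-1}$ additive error is negligible compared to the leading term. Combining the two sandwich inequalities yields the claimed asymptotic.

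The main obstacle is simply the bookkeeping to ensure that every $o(1)$ correction is uniform in $u$ across the entire range $[M^\ee, M^{1/2-\ee}]$; this reduces to the observation that each error term depends on $u$ only through $u^2/M$ and $(\log M)/u$, both of which are controlled by the endpoints of the interval. Uniformity over the random walk is automatic because the constant in \eqref{eq:stretched} depends only on the exponential-moment hypothesis on the increments, which is fixed throughout.
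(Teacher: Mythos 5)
Your proposal is correct and follows essentially the same route as the paper's proof: the reflection principle gives the exact Brownian probability $1 - 2\P(B_M \geq u) \sim \sqrt{2/\pi}\,u M^{-1/2}$ uniformly for $u \leq M^{1/2-\ee}$, and the stretched-exponential KMT bound \eqref{eq:stretched} sandwiches the random-walk probability between the Brownian values at $u \pm C\log M$ with an additive $M^{-\alpha}$ error, both perturbations being negligible because $u \geq M^{\ee}$. The only cosmetic difference is that you fix $\alpha = 1$ where the paper takes any $\alpha > 1/2$; both choices make $M^{-\alpha} = o(uM^{-1/2})$.
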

	\iffalse
	\begin{proof}
		For Brownian motion run to time $M$, the reflection principle gives
		$$\P (\sup_{0 \leq t \leq M} B_t \leq u) = 1 - 2 \P_0 (B_M \geq u)$$
		which is asymptotic to $(2/\pi)^{1/2} u M^{-1/2}$ uniformly as $u$
		varies over the $(0,M^{1/2 - \ee}]$ for any $\ee \in (0,1/2)$.
		Pick $\alpha > 1/2$.  By~\eqref{eq:stretched}, one then has
		$$\sqrt{\frac{2}{\pi}} \frac{u - C_\alpha \log M}{M^{1/2}} - M^{-\alpha}
		\leq \P (\max_{1 \leq j \leq M} S_j \leq u) 
		\leq \sqrt{\frac{2}{\pi}} \frac{u + C_\alpha \log M}{M^{1/2}} 
		+ M^{-\alpha} \, .$$
	\end{proof}
	\fi
	We will prove Theorem~\ref{th:better} in two cases, both of which will 
	make further use of the KMT coupling.  Two relevant functionals of 
	random walk will correspond to two functionals of Brownian motion, 
	and we will need to show that the expectations in the random walk 
	case are asymptotically equivalent to those in the Brownian motion case.  
	For this, we require a few results to show that these functionals are 
	sufficiently well-behaved.
	
	\subsection{Two Brownian functionals}
	
	The functionals of {interest} are \begin{align*}
	X_M &:= \frac{1}{\int_0^M \exp(B_s)\,ds} \\
	Y_M &:= \frac{B_M^-}{\int_0^M \exp(2 B_s)\,ds} 
	\end{align*}
	where we use the notation $Z^-:= \max\{-Z,0\}$.  
	
	\begin{lemma}\label{lem:UI-1}
		The family of variables $\{\frac{X_M}{\E X_M} \}_{M \geq 1}$ is 
		uniformly integrable.
	\end{lemma}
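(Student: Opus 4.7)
The plan is to prove uniform integrability by establishing a uniform $L^p$ bound for some $p > 1$, which gives UI by de la Vallée Poussin. First, using the Brownian scaling $B_s \diseq 2 W_{s/4}$ for another standard Brownian motion $W$, we have $A_1(M) = 4 A_2(M/4)$, whence $X_M = \tfrac{1}{4} A_2(M/4)^{-1}$ and $\E X_M = \tfrac{1}{4} m_2(M/4)$. Setting $s := M/4$, it is equivalent to prove that $V_s := A_2(s)^{-1}/m_2(s)$ is UI over $s \in [1/4, \infty)$. For $s$ in a compact interval $[1/4, s_0]$, monotonicity gives $A_2(s)^{-1} \leq A_2(1/4)^{-1}$, and the reflection-principle argument underlying Lemma~\ref{lem:UI} yields $\P[A_2(1/4)^{-1} > K] \leq 2\exp(-c(\log K)^2)$, so $A_2(1/4)^{-1}$ has all polynomial moments; combined with $m_2$ being continuous and positive on $[1/4, s_0]$, this handles the compact range.

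The substantive part is the regime $s \to \infty$. Here I would use the Matsumoto--Yor conditional formula~\eqref{eq:MY1}, $\E[A_2(s)^{-1} \mid B_s = x] = \phi(x)/s$ for $\phi(x) := xe^{-x}/\sinh x$, noting that $\phi(x) \sim -2x$ as $x \to -\infty$ and $\phi(x)$ decays exponentially as $x \to +\infty$. Combined with $s m_2(s) \sim \sqrt{2s/\pi}$ from~\eqref{eq:MY3}, the $B_s$-measurable part $\E[V_s \mid B_s] = \phi(B_s)/(s m_2(s))$ is asymptotically $\sqrt{2\pi}\,(B_s/\sqrt{s})^{-}$, a bounded continuous transformation of a standard normal random variable, and hence trivially bounded in every $L^p$. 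The task then reduces to controlling the residual $V_s - \E[V_s \mid B_s]$, and the clean target estimate is the conditional moment bound
$$\E[A_2(s)^{-p} \mid B_s = x] \leq C_p \bigl(\phi(x)/s\bigr)^p$$
uniformly in large $s$. Integrating against the Gaussian density of $B_s$ and using $\int \phi(x)^p \, dN(0,s)(x) = O(s^{p/2})$ then yields $\sup_s \E[V_s^p] < \infty$ for $p > 1$.

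The principal obstacle is precisely this uniform control of conditional higher moments of $A_2(s)^{-1}$ given $B_s$, since the explicit Matsumoto--Yor formula provides only the first conditional moment. Two routes appear viable: (i) use the Hartman--Watson / Bessel-type formula for the conditional density of $A_2(s)$ given $B_s$ and perform a saddle-point analysis for large $s$, extracting the claimed scaling; or (ii) exploit the time-reversal factorization $A_2(s) = e^{2B_s}\,\tilde{A}_s$ with $\tilde{A}_s := \int_0^s e^{-2\hat{B}_v}\,dv$ for the reversed Brownian motion $\hat{B}$, and estimate inverse moments of the bridge-conditioned exponential functional via Girsanov, where the drift of the conditioned path produces the correct $(|x|/s)^p$ scaling. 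Either approach is technical but conceptually transparent; in the second, the factorization makes manifest why the conditional law of $A_2(s)^{-1}$ concentrates around its mean at the scale needed for $L^p$ control, and the desired UI then follows by boundedness in $L^p$ for some $p > 1$.
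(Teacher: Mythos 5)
There is a genuine gap here, and it is not just that the key estimate is left unproven (you explicitly defer the ``principal obstacle'' to two sketched but unexecuted routes). The target estimate itself, $\E[A_2(s)^{-p}\mid B_s=x]\le C_p(\phi(x)/s)^p$, is false. Already for $p=2$, differentiating the Matsumoto--Yor identity (exactly as the paper does in its proof of Lemma~\ref{lem:moment-limit}) gives
$$\E\left[A_2(s)^{-2}\mid B_s=x\right]=\left(\frac{\phi(x)}{s}\right)^{2}\left(1+s\cdot\frac{x\cosh x-\sinh x}{x^{2}\sinh x}\right),$$
and the correction factor is $1+\Theta(s)$ for fixed $x\neq 0$: the conditional law of $A_2(s)^{-1}$ given $B_s$ does not concentrate around its conditional mean at the scale you need. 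Worse, the whole strategy of getting UI from a uniform $L^p$ bound with $p>1$ cannot succeed, because no such bound exists. Integrating the displayed formula against $dN(0,s)$ gives $\E[A_2(s)^{-2}]\asymp s^{-1/2}$ while $m_2(s)^2\asymp s^{-1}$, so $\E[V_s^2]\asymp\sqrt{s}\to\infty$; more generally, on the event that the Brownian motion leaves a neighborhood of $0$ quickly and never returns to $0$ on $[1,M]$ --- an event of probability $\asymp M^{-1/2}$ by the arcsine law, and the very event that makes $\E X_M\asymp M^{-1/2}$ --- one has $X_M$ of constant order, hence $X_M/\E X_M\asymp\sqrt{M}$ and $\E[(X_M/\E X_M)^p]\gtrsim M^{(p-1)/2}\to\infty$ for every $p>1$. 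So de la Vall\'ee Poussin with a power function is unavailable; the uniform integrability here is of the critical type and must be read off the tail directly.

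That is what the paper does. If $X_M\ge t$ (for $t>1$) then necessarily $\min_{0\le s\le 1}B_s\le-\log t$, since otherwise $\int_0^1 e^{B_s}\,ds>1/t$; by the reflection principle this has probability at most $C\exp(-\tfrac12(\log t)^2)$. Moreover, after first hitting $-\log t$ the path must spend at most one unit of time above $-\log t$ during $[0,M]$, which by the strong Markov property and L\'evy's arcsine law has probability $O(M^{-1/2})$. Multiplying gives $\P[X_M\ge t]\le C\exp(-\tfrac12(\log t)^2)\,M^{-1/2}$: superpolynomial decay in $t$ carrying exactly the factor $M^{-1/2}$ that matches $\E X_M$, from which uniform integrability follows by integrating the tail above level $K\,\E X_M$. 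Your reduction to $V_s=A_2(s)^{-1}/m_2(s)$ and your treatment of the compact range and of the conditional \emph{first} moment are fine, but to repair the argument you must replace the conditional-moment/$L^p$ machinery with a two-event tail decomposition of this kind.
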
  
	
	\begin{proof}
		{ We first claim that $\E[X_M] = \Theta\left( M^{-1/2} \right)$.  Indeed, 
			by Brownian scaling we have $$\E X_M = \E \left(\int_0^M \exp(B_s)\,ds \right)^{-1} = \frac{1}{4}\E \left(\int_0^{M/4}\exp(2 B_t)\,dt \right)^{-1} \sim \frac{1}{4}\cdot \sqrt{\frac{8}{\pi M}} $$
			where the asymptotic relation is by Proposition~\ref{prop:MY}.}
		
		We claim that there exists a universal $C > 0$ so that 
		$\P[X_M \geq t] \leq C t^{-2} M^{-1/2}$.  For $t \leq M^{1/3}$, 
		note that $\P[X_M \geq t]$ is at most the probability that both 
		$\min_{0 \leq { s} \leq 1} B_{ s} \leq -\log(t)$ and that after first 
		hitting $-\log(t)$, $B_s$ does not spend more than $1$ unit of time 
		above $-\log(t)$; { this is because if $\min_{0 \leq s \leq 1} B_s > - \log t$
			then 
			$$ X_M \leq \frac{1}{\int_0^1 \exp(B_s)\,ds} < \frac{1}{\int_0^1 \exp( -\log t) \,ds} = t\,.$$
			
		}
		
		Let $\tau = \inf\{s \in [0,1] : B_s = -\log(t)  \}$.  
		The reflection principle gives 
		$$\P[ \inf_{s \in [0,1]} B_s \leq -\log(t) ] = 
		2 \P[B_1 \leq - \log(t)] \leq C \exp\left(-\frac{1}{2}(\log(t))^2 
		\right)\,.$$
		
		Conditioned on $\tau < \infty$, note that the probability $B_s$ 
		does not go above $-\log(t)$ for more than $1$ unit of time is 
		$\frac{2}{\pi}\arcsin(\sqrt{1/{(M-\tau)}}) = \Theta(M^{-1/2})$ by 
		the strong Markov property together with L\'evy's $\arcsin$ law. 
		{For all $t$ we have}
		%we have 
		$$\P[X_M \geq t] \leq C\exp\left(-\frac{1}{2}(\log(t))^2 \right)
		\frac{1}{\sqrt{M}}\,.$$
		{ For all $t \geq 1$, there is some constant $C'> 0$ so that $$\exp\left(-\frac{1}{2}(\log t)^2\right) \leq C t^{-2}$$
			and so for $t \geq 1$ we have $$\P[X_m \geq t] = O(t^{-2} M^{-1/2})\,.$$
			
			For $K$ large enough, we may then bound $$\E\left[ \frac{X_M}{\E X_M} \one_{X_M/\E X_M \geq K}\right] 
			\leq C \sqrt{M} \int_{K }^\infty t^{-2}M^{-1/2}\,dt \leq C/K\,.$$
			Taking $K \to \infty$ completes the proof.
		}
	\end{proof}
	
	We find the asymptotics of the first two moments of the other functional; {this calculation-heavy proof is done in Appendix \ref{app:better-proofs}.}
	
	\begin{lemma} \label{lem:moment-limit}
		As $M \to \infty$ we have $ \E Y_M \to 1$ and $\E Y_M^2 \sim  4 \sqrt{2/\pi} \sqrt{M}$.
	\end{lemma}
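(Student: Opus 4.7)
Both claims come from conditioning on $B_M = x$ and integrating against the $N(0,M)$ density, using the explicit Matsumoto--Yor formulas. For $\E Y_M \to 1$, since $B_M^- = 0$ for $x \geq 0$, only $x < 0$ contributes, and~\eqref{eq:MY1} gives
$$
\E Y_M = \int_{-\infty}^{0} (-x) \cdot \frac{x e^{-x}}{M \sinh(x)} \cdot \frac{e^{-x^2/(2M)}}{\sqrt{2\pi M}}\,dx.
$$
Substituting $y = -x$ and then $u = y/\sqrt{M}$, and using $e^{y}/\sinh(y) = 2/(1 - e^{-2y})$, this becomes
$$
\E Y_M = \int_0^\infty \frac{2 u^2}{1 - e^{-2\sqrt{M}\, u}} \cdot \frac{e^{-u^2/2}}{\sqrt{2\pi}} \, du.
$$
For $M \geq 1$ the factor $(1 - e^{-2\sqrt{M}u})^{-1}$ is monotone decreasing in $M$ and bounded by $(1 - e^{-2u})^{-1}$; paired with the $u^2$ factor the apparent singularity near $u = 0$ is cancelled, and the dominating function is integrable against the Gaussian. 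Dominated convergence gives $\E Y_M \to \int_0^\infty 2u^2 e^{-u^2/2}/\sqrt{2\pi}\,du = 1$.

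For the second-moment asymptotic the strategy is analogous but requires an expression for $\E[A_2(M)^{-2}\mid B_M = x]$; such a formula can be derived from Yor's explicit joint density of $(A_2(M), B_M)$~\cite{MatsumotoYor2005} by integrating $u^{-2}$ against the conditional density. Substituting the resulting closed form, multiplying by $(-x)^2$, and integrating against the $N(0,M)$ density produces an integral amenable to Laplace-type analysis on the Brownian scale $|x| \asymp \sqrt{M}$, from which the asymptotic $\sim 4\sqrt{2/\pi}\sqrt{M}$ should follow.

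The main obstacle is identifying the correct source of the $\sqrt{M}$ growth and extracting the precise constant. Using only the pointwise leading behaviour $\E[A_2(M)^{-1}\mid B_M = x] \sim -2x/M$ for $x \to -\infty$, squared in place of the conditional second moment, gives after integration only an $O(1)$ contribution; the $\sqrt{M}$ growth must come from the conditional \emph{variance} of $A_2(M)^{-1}$ given $B_M$. Heuristically, on the event $\{B_M \approx 0\}$ (probability $\Theta(1/\sqrt{M})$), Brownian scaling and a Laplace-method estimate near the maximum of a Brownian bridge yield $\E[A_2(M)^{-2}\mid B_M = 0] = \Theta(1/M)$, a factor of $M$ larger than $(\E[A_2(M)^{-1}\mid B_M = 0])^2 = 1/M^2$. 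Tracking this fluctuation contribution uniformly in $x$ and matching the resulting integral to the exact constant $4\sqrt{2/\pi}$ is where the bulk of the calculation-heavy work lies.
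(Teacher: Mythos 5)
Your treatment of the first moment is correct and is essentially the paper's argument: condition on $B_M=x$ via~\eqref{eq:MY1}, rescale by $\sqrt{M}$, and pass to the limit (the paper says ``truncating, integrating and taking limits'' where you exhibit an explicit dominating function; either works). The problem is the second moment, which is the substantive half of the lemma and which your proposal does not actually prove. You correctly diagnose that squaring the conditional first moment only yields an $O(1)$ contribution and that the $\sqrt{M}$ growth must come from the conditional fluctuations of $A_2(M)^{-1}$ given $B_M$, but you stop at the point where the real work begins: no closed form for $\E\bigl[A_2(M)^{-2}\mid B_M=x\bigr]$ is derived, no integral is evaluated, and no constant is extracted. ``Such a formula can be derived \dots from which the asymptotic should follow'' is a plan, not a proof.

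For comparison, the paper obtains the needed conditional second moment by differentiating Equation (5.7) of~\cite{MatsumotoYor2005} twice in $\lambda$, giving
\begin{equation*}
\E\left[\left(\int_0^M e^{2B_s}\,ds\right)^{-2}\,\bigg|\,B_M=x\right]
=\frac{e^{-2x}\left(x^2\sinh(x)+Mx\cosh(x)-M\sinh(x)\right)}{M^2\sinh(x)^3}\,,
\end{equation*}
after which the same substitution $y=x/\sqrt{M}$ used for the first moment shows the integrand of $\E[Y_M^2]/\sqrt{M}$ converges pointwise to $-4y^3\one_{y<0}$, whence the constant $\int_{-\infty}^0(-4y^3)\,dN(0,1)(y)=4\sqrt{2/\pi}$. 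Note also that your heuristic localizes the growth to the event $\{B_M\approx 0\}$; this is misleading, since the weight $(B_M^-)^2$ vanishes there. The dominant contribution comes from typical values $x\asymp-\sqrt{M}$, where $\E[A_2(M)^{-2}\mid B_M=x]\approx -4x/M$ exceeds $\bigl(\E[A_2(M)^{-1}\mid B_M=x]\bigr)^2\approx 4x^2/M^2$ by a factor $M/|x|\asymp\sqrt{M}$; multiplying by $x^2\asymp M$ and integrating against $N(0,M)$ is exactly what produces $4\sqrt{2/\pi}\,\sqrt{M}$. To complete the proof you would need to supply the closed form above (or an equivalent) and justify the limit interchange uniformly in $x$, as the paper does.
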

	\iffalse
	\begin{proof}
		By \eqref{eq:MY1}, we compute \begin{align*}
		\E \frac{B_M^-}{\int_0^M \exp(2 B_s)\,ds} &= \int_{\R} \frac{-x^2 e^{-x} \one_{x \leq 0}}{M \sinh(x)} \,dN(0,M)(x) \\
		&=\int_{\R} \frac{-y^2 e^{-\sqrt{M}y} \one_{y \leq 0}}{\sinh(\sqrt{M}y)} \,dN(0,1)(y)\,.
		\end{align*}
		As $M \to \infty$, $e^{-\sqrt{M}y}/\sinh(\sqrt{M}y) \to -2\cdot  \one_{y < 0}$; truncating, integrating and taking limits then gives $$\E \frac{B_M^-}{\int_0^M \exp(2 B_s)\,ds} \to \int_{- \infty}^0 2 y^2 dN(0,1)(y) = 1\,.$$
		
		By differentiating Equation (5.7) of~\cite{MatsumotoYor2005}  
		twice with respect to $\lambda$, we have that 
		%	
		$$\E\left[\left(\int_0^M \exp(2 B_s)\,ds\right)^{-2} 
		\, \bigg|\, B_M = x \right] 
		= \frac{e^{-2x}\left(x^2 \sinh(x) + M x \cosh(x) - M \sinh(x) \right)}
		{M^2 \sinh(x)^3}\,.$$
		%
		This implies that 
		\begin{align*}
		\frac{\E[Y_M^2]}{\sqrt{M}} &= \int_\R \frac{e^{-2x}\one_{x < 0}(x^4\sinh(x) + Mx^3 \cosh(x) - Mx^2 \sinh(x)) }{M^{5/2} \sinh(x)^3}\,dN(0,M)(x) \\
		&= \int_\R \frac{e^{-2y \sqrt{M}}\one_{y < 0}(M^2y^4\sinh(\sqrt{M}y) + M^{5/2}y^3 \cosh(\sqrt{M}y) - M^2y^2 \sinh(\sqrt{M}y)) }{M^{5/2} \sinh(\sqrt{M}y)^3}\,dN(0,1)(y)\,.
		\end{align*}
		%
		The integrand converges to $-4 y^3 \one_{y < 0}$ as $M \to \infty$; truncating, integrating and taking limits shows $$\frac{\E[Y_M^2]}{\sqrt{M}} \to \int_\R -4 y^3 \one_{y < 0} dN(0,1)(y) = 4 \sqrt{\frac{2}{\pi}}\,.$$	
	\end{proof}
	\fi
	
	Lastly, we show that the expectation of $Y_M$ is not dominated by the contribution when $Y_M$ is much larger than $\sqrt{M}$. {Again, we defer the proof to Appendix \ref{app:better-proofs}.}
	
	\begin{lemma}\label{lem:expectation-functional}
		Let $Y_M$ denote the random variable $\frac{B_M^-}{\int_0^M \exp(2B_s)\,ds}$.  Then for all events $E$ with $\P[E] \leq M^{-1/2 - \eps}$ for some $\eps > 0$ we have $\E[Y_M \one_E] = o(1)$ as $M \to \infty$.
	\end{lemma}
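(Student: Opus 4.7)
The plan is to derive this lemma as a direct consequence of Cauchy--Schwarz combined with the second-moment estimate from Lemma~\ref{lem:moment-limit}. For any event $E$, I would write
$$\E[Y_M \one_E] \leq \sqrt{\E Y_M^2} \cdot \sqrt{\P[E]}.$$
By Lemma~\ref{lem:moment-limit}, $\E Y_M^2 \sim 4\sqrt{2/\pi}\sqrt{M}$, so $\sqrt{\E Y_M^2} = O(M^{1/4})$. The hypothesis $\P[E] \leq M^{-1/2 - \eps}$ then yields
$$\E[Y_M \one_E] = O(M^{1/4}) \cdot O(M^{-1/4 - \eps/2}) = O(M^{-\eps/2}) = o(1),$$
which is the desired conclusion.

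The strict inequality $\eps > 0$ in the hypothesis is what makes the argument go through. Because $Y_M$ has mean of order $1$ but its $L^2$ norm grows like $M^{1/4}$, the variable takes values much larger than its mean with modest probability, so Cauchy--Schwarz applied with only $\P[E] = O(M^{-1/2})$ would yield merely $\E[Y_M \one_E] = O(1)$. The additional factor $M^{-\eps}$ in the hypothesis is precisely what is needed to extract genuine decay. An equivalent layer-cake approach using Chebyshev's inequality to bound $\P[Y_M > t] = O(\sqrt{M}/t^2)$ and splitting the integral at the threshold $T = M^{1/2 + \eps/2}$ leads to the same optimization and the same $M^{-\eps/2}$ rate, so nothing is gained by avoiding Cauchy--Schwarz.

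Since the heavy lifting, namely the asymptotic evaluation of the second moment via the Matsumoto--Yor conditional formula, is already contained in Lemma~\ref{lem:moment-limit}, there is no substantial obstacle in proving Lemma~\ref{lem:expectation-functional}; the entire proof is a one-line application of Cauchy--Schwarz to the moment bound just established.
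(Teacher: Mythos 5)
Your proof is correct, and it takes a slightly different route from the paper's. The paper does not use Cauchy--Schwarz: it first converts the second-moment asymptotic of Lemma~\ref{lem:moment-limit} into a tail bound $\P[Y_M \geq t] \leq C\sqrt{M}\,t^{-2}$ via Chebyshev, shows $\E[Y_M \one_{Y_M \geq M^{1/2+\eps/2}}] = O(M^{-\eps/2})$ by integrating that tail, and then splits $\E[Y_M\one_E]$ according to whether $Y_M$ exceeds the threshold $M^{1/2+\eps/2}$, bounding the remaining piece by $M^{1/2+\eps/2}\P[E] \leq M^{-\eps/2}$. Your one-line Cauchy--Schwarz argument, $\E[Y_M\one_E] \leq (\E Y_M^2)^{1/2}\P[E]^{1/2} = O(M^{1/4})\cdot O(M^{-1/4-\eps/2})$, reaches the same $M^{-\eps/2}$ rate with less bookkeeping; as you note yourself, the truncation/layer-cake argument is the dual optimization of the same second-moment input, so the two proofs are equivalent in strength and both lean entirely on Lemma~\ref{lem:moment-limit}. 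Your version is shorter; the paper's version makes explicit where the mass of $Y_M$ concentrates, but that extra information is not used elsewhere, so nothing is lost by your approach. Your remark explaining why the strict inequality $\eps>0$ is needed (since $\E Y_M^2 \asymp \sqrt{M}$, the borderline $\P[E]=O(M^{-1/2})$ would only give $O(1)$) is accurate and a worthwhile observation.
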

	\iffalse
	\begin{proof}
		Using Lemma~\ref{lem:moment-limit} together with Chebyshev's inequality, we see that $$\P[Y_M \geq t] \leq C \sqrt{M} t^{-2}$$ for some constant $C$.  This implies that $$\E[Y_M \one_{Y_M \geq M^{1/2 + \eps/2} }] = \int_{M^{1/2 + \eps/2}}^\infty \P[Y_M \geq t] \,dt \leq C M^{-\eps/2} = o(1)\,. $$
		
		We then may write $$\E[Y_M \one_{E}] = \E[Y_M \one_{E} \one_{Y_M \geq M^{1/2 + \eps/2} }] + \E[Y_M \one_{E} \one_{Y_M < M^{1/2 + \eps/2} }] \leq o(1) + M^{1/2 + \eps/2} \P[E] \to 0\,.$$ 
	\end{proof}
	\fi
	
	\subsection{Medium-sized case: $N^{-1/2} \ll \delta \ll 1 / \log N$.} \label{sec:medium}
	
	\begin{lemma}\label{lem:large-delta-sandwich}
		There exists a constant $C'$ so that for $\delta$ in ${[0,1-\ee]}$ we have	
		$$e^{-C' \delta \log(N)} 
		\E \frac{\one_{E_n^c}}{N\int_0^1 \exp\left( \sqrt{N} \delta' B_t / \sqrt{2} 
			\right) \,dt} - 1/N^2  \leq  \P_N(\win(\delta)) 
		\leq e^{C' \delta \log(N)} 
		\E \frac{\one_{E_n^c}}{N\int_0^1 \exp\left( \sqrt{N}\delta' B_t / \sqrt{2} 
			\right) \,dt} + 1/N^2 \,.$$
		where $E_N$ is an event with $\P(E_N) \leq N^{-2}$.
	\end{lemma}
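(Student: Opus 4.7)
The starting point is the identity~\eqref{eq:approx}, $\P_N(\win(\delta)) = \E[1/\sum_{k=0}^{N-1} \exp(S_k)]$, together with the observation that the denominator equals $\int_0^N \exp(S_{\lfloor t \rfloor})\, dt$ when $S_{\lfloor t \rfloor}$ is viewed as a step function on $[0,N]$. The plan is to replace this step function by a rescaling of standard Brownian motion and then pass expectations through the sandwich pointwise. First, the bound \eqref{eq:delta} gives $|S_k - \St_k| \leq C_\eps \delta$ uniformly in $k$, so $\int_0^N \exp(S_{\lfloor t \rfloor})\,dt$ differs from $\int_0^N \exp(\St_{\lfloor t \rfloor})\,dt$ by a factor in $[e^{-C\delta}, e^{C\delta}]$ pointwise; this constant-factor error is negligible and will be absorbed.

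The core step is the KMT coupling (Lemma~\ref{lem:KMT}). The rescaled walk $Z_k := \sqrt{2}\,\St_k/\delta'$ is a lazy simple random walk with mean zero, unit variance, and bounded increments, so KMT provides a coupling with a standard Brownian motion $W$ on $[0,N]$ such that on an event $E_N^c$ with $\P(E_N) \leq N^{-2}$,
$$\sup_{t \in [0,N]} |Z_{\lfloor t \rfloor} - W_t| \leq C \log N.$$
Unscaling, on $E_N^c$ this becomes $\sup_{t \in [0,N]} |\St_{\lfloor t \rfloor} - (\delta'/\sqrt{2})\, W_t| \leq C'\delta \log N$, using that $\delta' \leq c_\eps\, \delta$ for $\delta \leq 1-\eps$. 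Exponentiating this pointwise bound and integrating over $t \in [0,N]$ yields
$$\int_0^N \exp(\St_{\lfloor t \rfloor})\,dt \in \left[e^{-C'\delta \log N},\, e^{C'\delta \log N}\right] \cdot \int_0^N \exp\bigl((\delta'/\sqrt{2}) W_t\bigr)\,dt$$
on $E_N^c$. Brownian scaling completes the transformation: setting $B_s := W_{Ns}/\sqrt{N}$ yields a standard Brownian motion on $[0,1]$, and the change of variables $s = t/N$ gives $\int_0^N \exp((\delta'/\sqrt{2}) W_t)\,dt = N \int_0^1 \exp(\sqrt{N}\, \delta'\, B_s/\sqrt{2})\,ds$.

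To conclude, the contribution from the bad event $E_N$ is controlled by the trivial bound $\sum_k \exp(S_k) \geq \exp(S_0) = 1$, giving $\E[\one_{E_N}/\sum_k \exp(S_k)] \leq \P(E_N) \leq N^{-2}$; this accounts for the $\pm N^{-2}$ additive terms in the statement. Combining this with the pointwise multiplicative sandwich on $E_N^c$, and absorbing the constant factor $e^{\pm C\delta}$ from step~\eqref{eq:delta} into $e^{\pm C'\delta \log N}$, produces both inequalities. There is no single \emph{hard} step since KMT is quoted as a black box; the main care lies in bookkeeping---verifying that the logarithmic coupling error survives exponentiation (hence the factor $e^{\pm C'\delta \log N}$, which is asymptotically $1$ precisely in the medium regime $\delta \ll 1/\log N$ covered by Section~\ref{sec:medium}) and that the step-function integral representation correctly bridges the discrete sum and the continuous Brownian integral.
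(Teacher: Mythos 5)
Your proposal is correct and follows essentially the same route as the paper: the integral representation $\sum_k \exp(\St_k)=\int_0^N\exp(\St_{\lfloor t\rfloor})\,dt$, the $e^{\pm C_\ee\delta}$ comparison from~\eqref{eq:delta}, the KMT coupling of the rescaled lazy walk $\sqrt{2}\,\St/\delta'$ to Brownian motion with error $C\log N$ off an event of probability at most $N^{-2}$, Brownian scaling to reduce $[0,N]$ to $[0,1]$, and the trivial bound $p(N)\le 1$ to absorb the bad event into the $\pm N^{-2}$ terms. No substantive differences.
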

	\begin{proof}
		For $t \in [0,N]$, define ${\St_t = \St_{\lfloor t \rfloor}}$.  Then note that 
		$$\sum_{{j =0 }}^{N-1} \exp(\St_j) = \int_{0}^N \exp(\St_t)\,dt\,. $$

		{ Let $$p(N) = Q_1(\tau_N < \tau_0)	 = \frac{1}{\sum_{k = 0}^{N-1}\exp(S_k) }$$ and recall that $\P_N(\win(\delta)) = \E p(N)$.} By \eqref{eq:approx} and \eqref{eq:delta}, we have 
		$$ \exp(-C_\ee \delta) \leq p(N) \cdot \left(\int_{0}^N \exp(\St_t) 
		\,dt \right) \leq \exp(C_\ee \delta)\,.$$
		
		Note that $\St_k$ is a random walk whose increments have variance 
		$\frac{(\delta')^2}{2}$.  By Lemma~\ref{lem:KMT}, there exists a 
		coupling so that 
		$$\P \left[\sup_{t \in [0,N]} \left| \frac{\sqrt{2}}{\delta'} 
		\St_t - B_t\right| \geq C \log(N)  \right] \leq \frac{1}{N^2}\,.$$
		
		Letting $E_N$ denote the event on the left-hand side, conditioned on 
		the event $E_N$, we have 
		$$ \exp\left(-C_\ee \delta - \frac{C \delta'}{\sqrt{2}}\log(N)\right)
		\leq p(N) \cdot \left(\int_{0}^N \exp(\delta' B_t / \sqrt{2}) \,dt \right) 
		\leq \exp\left(C_\ee \delta + \frac{C \delta'}{\sqrt{2}}\log(N)\right)\,.$$
		
		Since $\delta' \leq C_\ee' \delta$ for some constant $C_\ee'$, 
		we can find a new constant $C''$ so that 
		$$ \exp\left(-C'' \delta\log(N)\right)\leq p(N) \cdot 
		\left(\int_{0}^N \exp(\delta' B_t / \sqrt{2}) \,dt \right) 
		\leq \exp\left(C'' \delta \log(N)\right)$$
		
		conditioned on $E_N$.  Because $p(N) \leq 1$, and $\P[E_N] \leq 
		\frac{1}{N^2}$, the lemma follows  taking expectations 
		and Brownian scaling.
	\end{proof}
	
	\noindent{\sc Proof of Theorem~\ref{th:better} for medium $\delta$}.
	Because $\delta \log(N) \to 0$, Lemmas \ref{lem:KMT} and \ref{lem:UI-1} imply 
	$$\E[p(N)] \sim \frac{1}{N} m_{\sqrt{N} \delta' / \sqrt{2}}(1)$$
	{ where the uniform integrability guaranteed by Lemma \ref{lem:UI-1}
		implies that we may ignore the $\one_{E_n^c}$ term}
	Applying Lemma \ref{lem:MY rescaled} completes the proof.
	$\Cox$
	
	\subsection{Large case: $\delta = o(1 / (\log N)^{\ee})$}
	
	Let $r > 6$ be a real parameter { to be chosen later and} set $T := \lceil \delta^{-r} \rceil$.  
	Note that {$\exp (\delta^{-r}) \gg \exp( (\log N)^{r \eps})$} grows faster than any polynomial in $N$ once
	$r \ee > 1$.  Also, we may assume that $\delta^{-r} = o(N^s)$ for any
	positive $s$ because the medium case already covers the regime 
	$\delta \leq (\log N)^{-2}$, say, and in the complement of this case, 
	certainly any negative power of $\delta$ grows more slowly than
	any power of $N$.  { In order to handle the case at hand, we first show that the main contribution to $\P_N(G(\delta))$ is  the first $T$ steps of the walk.  This is handled in Lemma \ref{lem:main}.  Analyzing the resulting functional of the random walk up to $T$ will be done in a similar manner to the ``medium-sized case'': the KMT coupling will be employed to compare the random walk to a Brownian motion, and the corresponding functionals of Brownian motion will be the same as those appearing in Lemmas \ref{lem:moment-limit} and \ref{lem:expectation-functional}.}
	
	Recalling the process% $\{ \St_k \}$ and
	$\{ \St_k \}$  Section~\ref{sec:scaling}, we denote
	$Z := \St_T$ and $A := \sum_{k=0}^{T-1} \exp (\St_k)$.  For a real number or random variable $X$, we use the notation $X^+ := \max(X,0)$ for the positive part of $X$ and {$X^- := \max(-X,0)$} for the negative part of $X$.
	
	\begin{lemma} \label{lem:main} { Let $s > \eps^{-1}$ where we recall $\delta = o(1/(\log N)^\eps)$.  Then}
		$$\P_N (G(\delta)) =(1+o(1)) \frac{2}{\delta'\sqrt{\pi N}} \left(\E \frac{Z^-}{A} + O\left(\E \frac{\delta^{-s}}{A} \right)\right) + O(e^{-\delta^{-s}}) \, .$$
	\end{lemma}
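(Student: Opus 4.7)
The plan is to use the Markov property of $\St$ at time $T$ to decompose the denominator, dispose of the tail of $Z=\St_T$ via Hoeffding, and identify the main conditional contribution via Lemma~\ref{lem:RW}. By \eqref{eq:approx} and \eqref{eq:delta}, $\P_N(G(\delta))=(1+O(\delta))\E[1/\sum_{k=0}^{N-1}\exp(\St_k)]$. Applying the Markov property of the i.i.d.\ walk $\St$ at time $T$ writes the denominator as $A+e^Z A'$, where $A':=\sum_{k=0}^{N-T-1}\exp(\St'_k)$ for a fresh walk $\St'$ independent of $(A,Z)$ with the same law as $\St$.

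For the extreme region $\{|Z|>\delta^{-s}\}$: since $Z$ is a sum of $T=\lceil\delta^{-r}\rceil$ i.i.d.\ bounded mean-zero increments of scale comparable to $\delta$, Hoeffding's inequality gives $\P(|Z|>\delta^{-s})\leq 2\exp(-c\,\delta^{r-2-2s})$, which for $r$ taken sufficiently large is $o(e^{-\delta^{-s}})$. Combined with the trivial bound $1/(A+e^Z A')\leq 1$, this contributes $O(e^{-\delta^{-s}})$ to the expectation, matching the error in the statement.

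On $\{|Z|\leq\delta^{-s}\}$, condition on $(A,Z)$ and sandwich $\E'[1/(A+e^Z A')]$ between quantities of the form $\frac{1}{A}\P'(\max_k \St'_k\leq u)$. Set $u_\pm := Z^- \pm (\delta^{-s}+\log N)$. If $\max_k\St'_k\geq u_+$ then $A'\geq e^{u_+}$, so $e^Z A'\geq e^{\delta^{-s}+\log N}$ and this event contributes at most $O(e^{-\delta^{-s}})$; on its complement $1/(A+e^Z A')\leq 1/A$, yielding
\[
\E'[1/(A+e^Z A')]\leq \tfrac{1}{A}\P'(\max_k\St'_k\leq u_+)+O(e^{-\delta^{-s}}).
\]
Conversely, on $\{\max_k\St'_k\leq u_-\}$ we have $A'\leq N e^{u_-}$, hence $e^Z A'\leq e^{-\delta^{-s}}$ and $1/(A+e^Z A')\geq(1-O(e^{-\delta^{-s}}))/A$, producing the matching lower bound. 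Applying Lemma~\ref{lem:RW} to the rescaled walk $\St'/(\delta'/\sqrt{2})$ (whose increments have variance $1$) gives $\P'(\max_k\St'_k\leq u_\pm)\sim \tfrac{2u_\pm}{\delta'\sqrt{\pi N}}$. Since the assumption $s>\ee^{-1}$ together with $\delta=o((\log N)^{-\ee})$ yields $\log N\leq\delta^{-s}$, one has $u_\pm = Z^- + O(\delta^{-s})$, and both bounds collapse to $\frac{2 Z^-}{A\delta'\sqrt{\pi N}} + O(\tfrac{\delta^{-s}}{A\delta'\sqrt{\pi N}})$. Averaging over $(A,Z)$ gives the claim.

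The main obstacle is verifying that Lemma~\ref{lem:RW}'s uniform estimate applies at the thresholds $u_\pm$: the lemma requires $u\in[M^\ee,M^{1/2-\ee}]$ for $M=N-T$, so care is needed when $Z^-$ is either very small (near or below $N^\ee$) or when $u_+$ is pushed close to $N^{1/2-\ee}$ by the additive $\delta^{-s}$ shift. These boundary cases should be handled by further dividing $\{|Z|\leq\delta^{-s}\}$ according to the size of $Z^-$, using direct reflection-principle estimates in the extreme subranges and absorbing any slack into the $O(\E[\delta^{-s}/A])$ error term. Once this is done, the sandwich collapses to the stated decomposition.
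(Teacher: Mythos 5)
Your overall architecture---decompose the denominator at time $T$, condition on $(A,Z)$, apply Lemma~\ref{lem:RW} to the post-$T$ walk to extract a factor asymptotic to $\tfrac{2}{\delta'\sqrt{\pi N}}(Z^-+O(\delta^{-s}))$, and dump the complementary event into an $e^{-\delta^{-s}}$ error---is exactly the paper's proof. But your preliminary step of discarding the region $\{|Z|>\delta^{-s}\}$ is a fatal error, not a harmless reduction. Your own Hoeffding exponent is $c\,\delta^{-(2s+2-r)}$, which tends to \emph{zero} (making the bound trivial) precisely when $r$ is large; the inequality $\P(|Z|>\delta^{-s})=o(e^{-\delta^{-s}})$ would require $r<s+2$. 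Meanwhile the rest of the argument (Lemma~\ref{lem:KMT-use} and the final computation, where one needs $\E[\delta^{-s}/A]=O(\delta^{1-s}T^{-1/2})=o(\delta^2)=o(\E[Z^-/A])$) forces $r>2s+2$, which is why the paper chooses $r/2-1>s$. Under that choice $Z=\St_T$ has standard deviation $\sqrt{T}\,\delta'\asymp\delta^{1-r/2}\gg\delta^{-s}$, so $\{|Z|>\delta^{-s}\}$ is the \emph{typical} event, and it is exactly the configurations with $Z^-\gg\delta^{-s}$ that produce the main term $\E[Z^-/A]\sim\delta^2$. Restricting your sandwich to $\{|Z|\le\delta^{-s}\}$ therefore bounds $Z^-$ by $\delta^{-s}$ throughout and collapses the conclusion to $O(\E[\delta^{-s}/A])$ plus garbage---the statement you would prove has no main term left.

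The repair is to delete the extreme-region step entirely and run your sandwich for \emph{all} values of $Z$: take the thresholds for the fresh walk to be $-Z\mp\delta^{-s}$ (equivalently, work with the events $\{\max_{T\le m\le N}\St_m\le\mp\delta^{-s}\}$ as the paper does), note that $(-Z-\delta^{-s})^+=(Z+\delta^{-s})^-=Z^-+O(\delta^{-s})$ and that the tail sum on the good event is at most $Ne^{-\delta^{-s}}=o(1)=o(A)$, and then take unconditional expectations. Two smaller points: your claim that $e^ZA'\le e^{-\delta^{-s}}$ on $\{\max_k\St'_k\le u_-\}$ fails for $Z>0$ (one only gets $e^{Z^+-\delta^{-s}}\le 1$), though this is harmless since the lower bound is trivially nonnegative where $Z^-=0$; and your concern about $u$ leaving the range $[M^{\ee},M^{1/2-\ee}]$ in Lemma~\ref{lem:RW} is legitimate (the paper glosses over it), with the small-$u$ case indeed absorbable into $O(\E[\delta^{-s}/A])$ as you suggest.
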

	
	\begin{proof}
		Recall ~\eqref{eq:delta} that $\{ \St_n \}$ is uniformly close to
		$\{ S_n \}$ as $\delta \to 0$, therefore~\eqref{eq:approx} implies
		$\P_N (G(\delta)) \sim \E (A^{-1})$
		which we will use instead of~\eqref{eq:approx}.
		
		We show the asymptotic equality in the statement of the theorem
		as two inequalities:
		\begin{align} 
		\P_N (G(\delta)) & \geq  (1 + o(1)) \frac{2}{\delta'\sqrt{\pi N}} \left(\E \frac{Z^-}{A} + O\left(\E \frac{\delta^{-s}}{A} \right)\right) 
		\label{eq:LB} \, ; \\
		\P_N (G(\delta)) & \leq  (1 + o(1)) \frac{2}{\delta'\sqrt{\pi N}} \left(\E \frac{Z^-}{A} + O\left(\E \frac{\delta^{-s}}{A} \right)\right) + e^{-\delta^{-s}}
		\label{eq:UB} \, .
		\end{align}
		Choose $r$ so that $r/2 - 1 > s > \ee^{-1}$.  Let $G'$ denote 
		the event $\{ \max_{T \leq m \leq N} \St_m \leq -\delta^{-s} \}$.
		We condition on $A$ and $Z$. {
			\begin{align} 
			\P_N (G(\delta) \,|\,A,Z) &=  (1 + o(1))\E\left[ \frac{1}{\sum_{k = 0}^{N-1}\exp(\St_k) }\,\bigg|\,A,Z\right] \nonumber \\
			&\geq (1 + o(1))\E\left[ \frac{\one_{G'}}{\sum_{k = 0}^{N-1}\exp(\St_k) }\,\bigg|\,A,Z\right] \nonumber \\
			&\geq  \frac{1}{A + N e^{-\delta^{-s}}} \P_N (G' \| A , Z)\,. \label{eq:LB-intermediate}
			\end{align}}
		Since $\St_m / (\delta' / \sqrt{2})$ is a random walk with centered increments of variance $1$, Lemma~\ref{lem:RW} gives $$\P_N(G' \,|\, A,Z) \sim \sqrt{\frac{2}{\pi(N - T)}}\cdot \frac{(-Z - \delta^{-s})^+}{\delta'/\sqrt{2}} \sim \frac{2 (Z + \delta^{-s})^-}{\delta'\sqrt{\pi N}}\,. $$
		
		Combining with \eqref{eq:LB-intermediate} gives $$\P_N(G(\delta) \| A,Z) \geq (1 + o(1))\frac{2}{\delta' \sqrt{\pi N}}\cdot \frac{(Z + \delta^{-s})^-}{A + N e^{-\delta^{-s}}}\,. $$
		
		The quantity $A$ is at least~1, while $N e^{-\delta^{-s}} \to 0$, 
		therefore $A + N e^{-\delta^{-s}} \sim A$.  { Further, write $$(Z + \delta^{-s})^{-} = -Z \one_{Z \leq -\delta^{-s}} - \delta^{-s}\one_{Z \leq -\delta^{-s}} = -Z (\one_{Z \leq 0} - \one_{Z \in [-\delta^{-s},0]}) - \delta^{-s}\one_{Z \leq -\delta^{-s}} = Z^{-} + O(\delta^{-s})\,. $$}
		Taking unconditional expectations now gives~\eqref{eq:LB}.
		
		For the reverse inequality, let $G''$ denote the event that
		$\{ \max_{T \leq m \leq N} \St_m \leq \delta^{-s} \}$.
		On the complement of this event, at least one summand in
		the denominator of~\eqref{eq:approx} is at least ${\exp(\delta^{-s})}$.
		Because $\P_N (G(\delta) \| A , Z)$ is always at most $1/A$, we see that
		\begin{align*}
		\P_N (G(\delta) \| A , Z) &\leq  \frac{1}{A} \P_N (G'' \| A , Z) +  {\frac{1}{A} \P_N ((G'')^c \| A , Z) }\\ 
		&\leq  \frac{1}{A} \P_N (G'' \| A , Z)   + \exp (- \delta^{-s}) \\
		& \leq  (1 + o(1)) \sqrt{\frac{2}{\pi (N-T)}} \frac{(\delta^{-s} - Z)^+}{A \delta'/\sqrt{2}}
		+ e^{-\delta^{-s}} \, .
		\end{align*}
		Again, taking unconditional expectations finishes, yielding~\eqref{eq:UB}.
	\end{proof}

	We are now in a position to apply the KMT coupling to find the expectations that appear in Lemma~\ref{lem:main}.
	
	\begin{lemma} \label{lem:KMT-use}
		\begin{align}
		\E \frac{Z^-}{A} &\sim \delta^2\,;\\
		\E \frac{1}{A} &\sim\frac{\delta}{\sqrt{\pi T}} \,.
		\end{align}
	\end{lemma}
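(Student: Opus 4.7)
Set $\alpha := \delta'\sqrt{T}/\sqrt{2}$; since $\delta' \sim 2\delta$ and $T = \lceil \delta^{-r}\rceil$ with $r > 6$, we have $\alpha \sim \sqrt{2}\,\delta^{1-r/2} \to \infty$. The strategy is to apply the KMT coupling of Lemma~\ref{lem:KMT} to the rescaled walk $(\sqrt{2}/\delta')\St_k$, whose increments have mean zero and variance one, match $A$ and $Z$ to the Brownian functional $I(\alpha) := \int_0^1 \exp(\alpha B_u)\,du$ via a further Brownian scaling, and then invoke the expectations already computed in Lemmas~\ref{lem:MY rescaled} and~\ref{lem:moment-limit}. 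Using the stretched-exponential form~\eqref{eq:stretched} of KMT with $\alpha = 10$, there is a standard Brownian motion $\tilde B$ such that, off an event $E$ with $\P(E) \le T^{-10}$,
$$\sup_{t\in[0,T]}\bigl|\St_t - (\delta'/\sqrt{2})\tilde B_t\bigr| \le C\delta' \log T =: \varepsilon_0,$$
with $\varepsilon_0 = o(1)$ since $\log T = O(\log(1/\delta))$ and $\delta = o((\log N)^{-\eps})$. Setting $B_u := \tilde B_{uT}/\sqrt{T}$, a standard Brownian motion on $[0,1]$, on $E^c$ we then have $A = e^{O(\varepsilon_0)}\,T\,I(\alpha)$ and $Z^- = \alpha B_1^- + O(\varepsilon_0)$.

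A second Brownian scaling $W_s := (\alpha/2)\,B_{s/M}$ with $M := \alpha^2/4$ yields $I(\alpha) = M^{-1}\!\int_0^M \exp(2W_s)\,ds$ and $B_1^- = M^{-1/2} W_M^-$, so that $B_1^-/I(\alpha) \stackrel{d}{=} (\alpha/2)\,Y_M$ for the functional $Y_M$ of Lemma~\ref{lem:moment-limit}, giving $\E[B_1^-/I(\alpha)] \sim \alpha/2$. Separately, Lemma~\ref{lem:MY rescaled} gives $\E[1/I(\alpha)] = m_\alpha(1) \sim \alpha/\sqrt{2\pi}$. Substituting and using $\alpha^2 = (\delta')^2 T/2$ together with $\delta' \sim 2\delta$ yields
$$\E\!\left[\frac{\one_{E^c}}{A}\right] \sim \frac{\alpha}{T\sqrt{2\pi}} = \frac{\delta'}{2\sqrt{\pi T}} \sim \frac{\delta}{\sqrt{\pi T}}, \qquad \E\!\left[\frac{Z^-\one_{E^c}}{A}\right] \sim \frac{\alpha^2}{2T} = \frac{(\delta')^2}{4} \sim \delta^2,$$
which are the claimed asymptotics up to the $\one_E$ contribution.

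Three small errors remain to be absorbed. First, the multiplicative $e^{O(\varepsilon_0)}$ factor is deterministic with $\varepsilon_0 = o(1)$, so it becomes $1+o(1)$. Second, the additive $O(\varepsilon_0)$ error in $Z^-$ contributes $O(\varepsilon_0)\cdot\E[1/A] = o(\delta/\sqrt{\pi T}) = o(\delta^2)$, using $T \gg \delta^{-2}$. Third, the contribution of the bad event $E$ is controlled by the already-established uniform-integrability statements: Lemma~\ref{lem:UI-1} applied via the identification $1/I(\alpha) \stackrel{d}{=} \alpha^2\, X_{\alpha^2}$ (from yet another Brownian scaling) together with $\P(E) \le T^{-10}$ gives $\E[\one_E/A] = o(\E[1/A])$; and Lemma~\ref{lem:expectation-functional} applied with $M = \alpha^2/4$, noting that $T^{-10} \ll M^{-1/2-\eps'}$ for small $\eps' > 0$, gives $\E[\one_E Y_M] = o(1)$, hence $\E[Z^-\one_E/A] = o(\E[Z^-/A])$. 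The main obstacle is bookkeeping: arranging the KMT coupling at a polynomial tail bound and verifying that the various $1+o(1)$ factors at each stage combine into a single $1+o(1)$ at the end, rather than amplifying.
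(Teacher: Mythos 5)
Your proposal is correct and follows essentially the same route as the paper: KMT coupling of the rescaled walk to Brownian motion, Brownian scaling to reduce $1/A$ and $Z^-/A$ to the functionals $m_\alpha(1)$ and $Y_M$, and then Lemmas~\ref{lem:MY rescaled}, \ref{lem:moment-limit}, \ref{lem:UI-1} and~\ref{lem:expectation-functional} to pass expectations through, with matching constants. The one loose point is that controlling the walk functional $Z^-/A$ on the bad event $E$ is not literally an application of Lemma~\ref{lem:expectation-functional} (which concerns the Brownian $Y_M$); it needs the separate one-line bound $\E[(Z^-/A)\one_E]\le \delta' T\,\P(E)=o(\delta^2)$ using $A\ge 1$, which is exactly the paper's $O(T^{-2})$-type term.
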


	{ The proof is similar to the results of Section \ref{sec:medium}, and so we defer to the appendix once more.  While the second asymptotic equality follows  the result of Section \ref{sec:medium}, we include a proof as well since the intermediate steps required for the first expression essentially prove it.}

	\noindent{\sc Proof of Theorem~\ref{th:better} for large $\delta$:}  Combining Lemma \ref{lem:main} with Lemma \ref{lem:KMT-use} gives
	\begin{align*}
	P_N(G(\delta)) &= (1 + o(1))\frac{2}{\delta' \sqrt{\pi N}}\left(\E \frac{Z^-}{A} + O\left(\E \frac{\delta^{-s}}{A}\right) \right) + O(e^{-\delta^{-s}}) \\
	&= (1 + o(1))\frac{1}{\delta \sqrt{\pi N}}\left(\delta^2+ O\left(\frac{\delta^{1-s}}{ \sqrt{T}}\right) \right) + O(e^{-\delta^{-s}}) \\
	&\sim \frac{\delta}{\sqrt{\pi N}}
	\end{align*}
	where we used that $T = {\Omega(\delta^{-r})} = \Omega(\delta^{-6})$ to show that $\delta^2 + O(\delta^{1-s} T^{-1/2}) \sim \delta^2$.  $\Cox$
	
	\section{Numerical simulations and further questions, {and biological applications}} \label{sec:data}
	
	To double check the results of Theorem~\ref{th:main}, we simulated
	the process for $N=250$ and $c=2$.  Thus, $\delta = c/\sqrt{N} = 
	2/\sqrt{250} \approx 0.126$.  Theorem~\ref{th:main} predicts that
	as $N \to \infty$, $N \P_N (G(\delta)) \to g(2)$.  Numerically
	evaluating the integral defining $g(2)$ gives approximately $1.516$.
	Our quick and dirty Monte Carlo simulation gives $N \P_N (G(\delta))
	= 1.521 \pm 0.06$.  We could have done more simulations to lower
	the standard error, but in fact because we ran simulations for 
	$N = 10 m$ for every $m \leq 25$, there is already greater accuracy.
	Figure~\ref{fig:line} shows all of these data points, as well as
	similar data for $c=3$ and $1 \leq m \leq 15$ (here $g(3) \approx 1.97$).
	The limits predicted by Theorem~\ref{th:main} are corroborated, or at
	least not contradicted, by the data.
	\begin{figure}[!ht]
		\centering
		\includegraphics[width=4in]{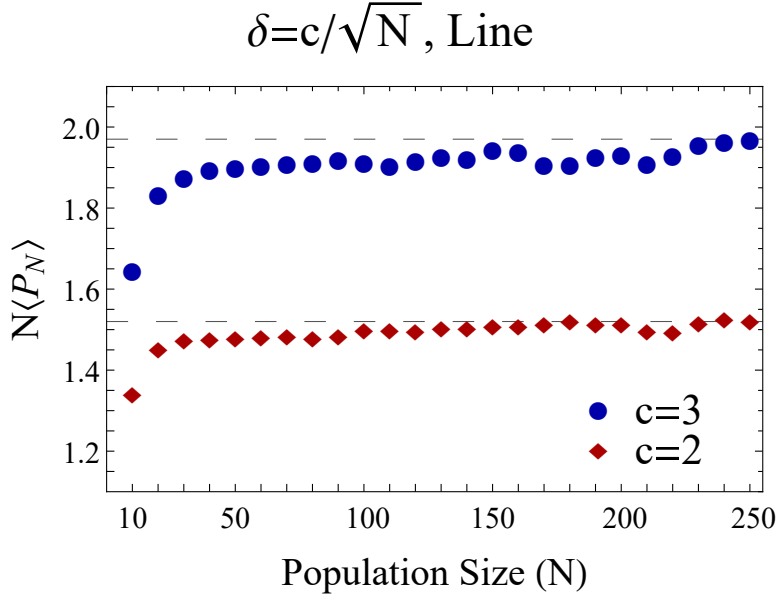}
		\caption{The average mutant fixation probability, $\langle P_N\rangle$ times $N$ as a function of $N$ for the line model. The fitness of both mutants and normals at different locations are {drawn}  the same two-valued distribution function, where values $1-\delta$ and $1 + \delta $ are equally likely. Two different values of $\delta$ are used: $2/\sqrt{N}$ (red points) and $3/\sqrt{N}$ (blue points). The points are based on stochastic simulations and each data point represents the average over $10^6$ independent realizations.}
		\label{fig:line}
	\end{figure}
	
	Next we ran simulations to investigate Conjecture~\ref{conj:1}.
	Recall, the limit is known when $\delta \to 0$ as fast as any power
	$(\log N)^{-\ee}$, whereas this should fail when $\delta$ remains
	constant; the conjecture covers the ground in between, which is
	clearly too slim to distinguish numerically.  The best we could do
	was to hold $\delta$ constant, thus allowing $c := \delta \sqrt{N}$
	to go to infinity. One might 
	expect~\eqref{eq:main} that $N \P_N (G(\delta))$ is well 
	approximated by $g(c)$, leading to 
	\begin{equation} \label{eq:second}
	\sqrt{\pi N} \P_N (G(\delta)) \approx 
	\frac{g(\delta \sqrt{N}) \sqrt{\pi}}{\sqrt{N}} \, ,
	\end{equation}
	which is asymptotic to $\delta$ by~\eqref{eq:c=infty}.
	Indeed, the data (red points in Figure~\ref{fig:infty}) is a
	very good match for~\eqref{eq:second} (the blue curve
	in Figure~\ref{fig:infty}), which can be seen to be asymptotic
	to~$0.2$.
	\begin{figure}[!ht]
		\centering
		\includegraphics[width=3.8in]{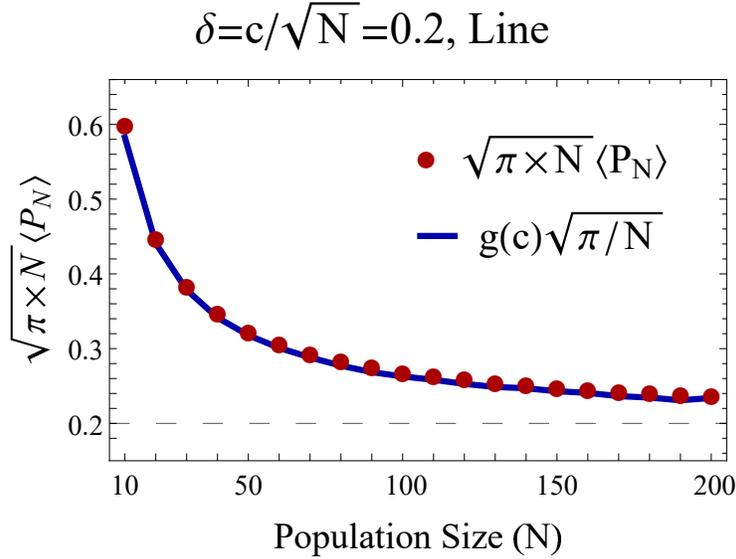}
		\caption{Formula  Eq.\eqref{eq:second}    ($g(\delta\sqrt{N})\sqrt{\pi/N}$, blue curve) is compared with the stochastic simulation results for $\sqrt{\pi\times N}\langle P_N\rangle$ (red points), plotted a functions  of $N$, with  $\delta = 0.2$. }
		\label{fig:infty}
	\end{figure}
	
	Among the open questions on this model, one that looms large is 
	whether these results or something similar can be transferred 
	to the circular model.  Between the line and circle model, neither seems
	inherently more compelling; however the fact that the birth and death
	chain reasoning holds only for the line model has prevented us 
	understanding the situation on any other graphs or initial conditions.
	We enumerate some problems in what we expect to be increasing order of
	difficulty.
	
	\begin{problem} \label{prob:1}
		On a line segment graph, extend the model to the case where the 
		initial configuration is something other than mutants in an
		interval containing an endpoint.
	\end{problem}
	
	\begin{problem} \label{prob:2}
		Extend the analysis to a circle.
	\end{problem}
	
	We were curious whether empirically, the circle appears to behave
	differently  the line.  Figure~\ref{fig:cycle} shows the comparison. 
	It appears that the limiting value of $N \P_N (G(\delta))$ for the 
	circle is just a shade less than for the line.  But also, it appears
	that the value approaches the limit much faster for the circle, and
	perhaps with less sample variance.  On a circle, starting with a single
	mutation, the interval set of mutant sites remains an interval, which
	can now grow and shrink at both ends rather than  just on the right.
	These two growth processes are not independent, but may still be the
	reason we observe faster convergence and lesser variance.
	\begin{figure}[!ht]
		\centering
		\includegraphics[width=4in]{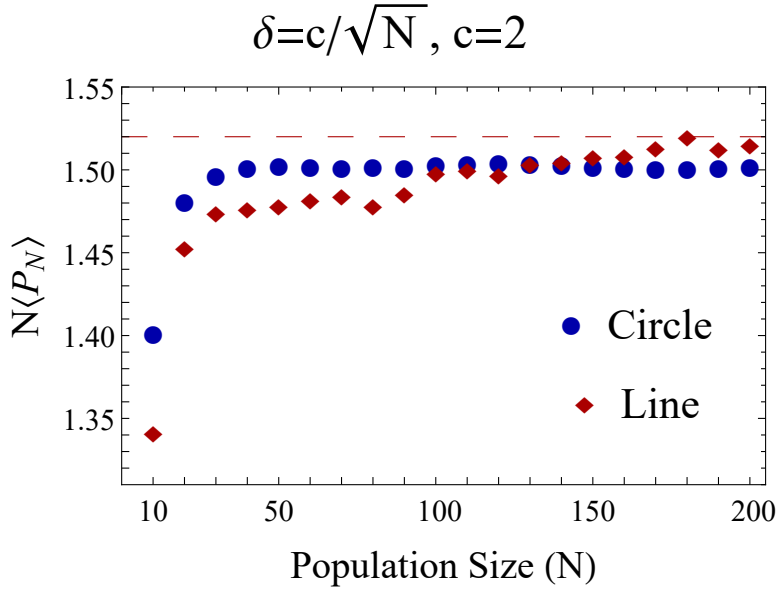}
		\caption{Comparison of the line and the circle model. The quantity $N\langle P_N\rangle$  is plotted as a function of N for the circle model (blue dots) and line model (red dots) with $\delta=2/\sqrt{N}$. For each value of N, the average of $10^6$ random simulations is presented. }
		\label{fig:cycle}
	\end{figure}

	\begin{problem} \label{prob:3}
		Extend the analysis to any graph with a vertex of degree at least~3.  The 
		difficulty here is that the cluster of mutants can become disconnected.
	\end{problem}
	
	{The present study contributes to theoretical understanding of evolutionary processes that have important biomedical applications. The first step in cancer initiation is often a spread and (local) fixation of a neutral mutation, which by itself does not confer an explicit selective advantage to the cell, but serves as a springboard for further transformations. For example, mutations in the so-called tumor suppressor genes drive the progression of many cancers, including colorectal, breast, uterine, ovarian, lung,  head and neck, pancreatic,  and bladder cancer \cite{sherr2004principles, joyce2018cancer}. Tumor suppressor genes are sometimes compared with a brake pedal on a car, as they keep the cell's reproduction in check, preventing it  dividing too quickly. An inactivation of a single copy of a tumor suppressor gene is often considered a ``neural" mutation, because if the second copy is still active, an inactivation of a single copy of the gene does not result in any phenotypic changes. It is only when the second copy of a tumor suppressor gene is inactivated, the cell starts experiencing a selective advantage (because the ``brake" is ``off"). The spread and local fixation of mutants with a single, selectively neutral,  mutation inactivating the first copy of a tumor suppressor gene is the type of problem where our present results can be applied. For example, a plausible scenario for colorectal cancer initiation is fixation of a single-hit mutant, which comes to dominate a local compartment of colonic tissue (called a crypt). This could be followed eventually by the second mutation, which then leads to a local outgrowth and  creation of a ``dysplastic crypt" or a polyp. The first stage (the fixation of neutral, single-hit mutants) has been studied extensively in the context of tumor-suppressor gene inactivation (see e.g. \cite{nowak2002role, komarova2003mutation}) but not in the presence of environmental randomness. Results reported in this paper allow to account for the role of variability in tissue microenvironment, and suggest that single-mutant fixation is more likely than predicted by non-random models. Further models that include more realistic geometries, as well as heterogeneity of cell types (such as stem cells vs differentiated cells) will require further mathematical efforts. The current manuscript lays a foundation for such future efforts.  }

	\section*{Acknowledgments}
	The authors thank Weichen Zhou for comments on a previous draft.  {The authors thank the anonymous referees for comments as well.}
	
	\bibliographystyle{alpha}
	\bibliography{random_cite}

\newcommand{\etalchar}[1]{$^{#1}$}
\begin{thebibliography}{FSDKK19}

\bibitem[AD13]{aurzada-dereich}
F.~Aurzada and S.~Dereich.
\newblock Universality of the asymptotics of the one-sided exit problem for
  integrated processes.
\newblock {\em Ann. IHP Prob. Stat.}, 49:236--251, 2013.

\bibitem[AHL07]{adler2007niche}
Peter~B Adler, Janneke HilleRisLambers, and Jonathan~M Levine.
\newblock A niche for neutrality.
\newblock {\em Ecology letters}, 10(2):95--104, 2007.

\bibitem[BTS04]{boles2004self}
Blaise~R Boles, Matthew Thoendel, and Pradeep~K Singh.
\newblock Self-generated diversity produces “insurance effects” in biofilm
  communities.
\newblock {\em Proceedings of the National Academy of Sciences},
  101(47):16630--16635, 2004.

\bibitem[CGJD15]{cvijovic2015fate}
Ivana Cvijovi{\'c}, Benjamin~H Good, Elizabeth~R Jerison, and Michael~M Desai.
\newblock Fate of a mutation in a fluctuating environment.
\newblock {\em Proceedings of the National Academy of Sciences},
  112(36):E5021--E5028, 2015.

\bibitem[Che94]{chesson1994multispecies}
Peter Chesson.
\newblock Multispecies competition in variable environments.
\newblock {\em Theoretical population biology}, 45(3):227--276, 1994.

\bibitem[Che00a]{chesson2000general}
Peter Chesson.
\newblock General theory of competitive coexistence in spatially-varying
  environments.
\newblock {\em Theoretical population biology}, 58(3):211--237, 2000.

\bibitem[Che00b]{chesson2000mechanisms}
Peter Chesson.
\newblock Mechanisms of maintenance of species diversity.
\newblock {\em Annual review of Ecology and Systematics}, 31(1):343--366, 2000.

\bibitem[CW81]{chesson1981environmental}
Peter~L Chesson and Robert~R Warner.
\newblock Environmental variability promotes coexistence in lottery competitive
  systems.
\newblock {\em The American Naturalist}, 117(6):923--943, 1981.

\bibitem[DS18]{danino2018fixation}
Matan Danino and Nadav~M Shnerb.
\newblock Fixation and absorption in a fluctuating environment.
\newblock {\em Journal of theoretical biology}, 441:84--92, 2018.

\bibitem[ESAH19]{ellner2019expanded}
Stephen~P Ellner, Robin~E Snyder, Peter~B Adler, and Giles Hooker.
\newblock An expanded modern coexistence theory for empirical applications.
\newblock {\em Ecology letters}, 22(1):3--18, 2019.

\bibitem[Fis30]{fisher1930evolution}
RA~Fisher.
\newblock The evolution of dominance in certain polymorphic species.
\newblock {\em The American Naturalist}, 64(694):385--406, 1930.

\bibitem[Fra11]{frank2011natural}
Steven~A Frank.
\newblock Natural selection. i. variable environments and uncertain returns on
  investment.
\newblock {\em Journal of evolutionary biology}, 24(11):2299--2309, 2011.

\bibitem[FS90]{frank1990evolution}
Steven~A Frank and Montgomery Slatkin.
\newblock Evolution in a variable environment.
\newblock {\em The American Naturalist}, 136(2):244--260, 1990.

\bibitem[FSDKK19]{farhang2019environmental}
Suzan Farhang-Sardroodi, Amir~H Darooneh, Mohammad Kohandel, and Natalia~L
  Komarova.
\newblock Environmental spatial and temporal variability and its role in
  non-favoured mutant dynamics.
\newblock {\em Journal of the Royal Society Interface}, 16(157):20180781, 2019.

\bibitem[FSDN{\etalchar{+}}17]{farhang2017effect}
Suzan Farhang-Sardroodi, Amirhossein~H Darooneh, Moladad Nikbakht, Natalia~L
  Komarova, and Mohammad Kohandel.
\newblock The effect of spatial randomness on the average fixation time of
  mutants.
\newblock {\em PLoS computational biology}, 13(11):e1005864, 2017.

\bibitem[GG02]{gavrilets2002fixation}
Sergey Gavrilets and Nathan Gibson.
\newblock Fixation probabilities in a spatially heterogeneous environment.
\newblock {\em Population Ecology}, 44(2):51--58, 2002.

\bibitem[Gil77]{gillespie1977natural}
John~H Gillespie.
\newblock Natural selection for variances in offspring numbers: a new
  evolutionary principle.
\newblock {\em The American Naturalist}, 111(981):1010--1014, 1977.

\bibitem[GML10]{graves2010tumor}
Edward~E Graves, Amit Maity, and Quynh-Thu Le.
\newblock The tumor microenvironment in non--small-cell lung cancer.
\newblock In {\em Seminars in radiation oncology}, volume~20, pages 156--163.
  Elsevier, 2010.

\bibitem[Hal27]{haldane1927mathematical}
John Burdon~Sanderson Haldane.
\newblock A mathematical theory of natural and artificial selection, part v:
  selection and mutation.
\newblock {\em Mathematical Proceedings of the Cambridge Philosophical
  Society}, 23(7):838--844, 1927.

\bibitem[HCM94]{hassell1994species}
Michael~P Hassell, Hugh~N Comins, and Robert~M May.
\newblock Species coexistence and self-organizing spatial dynamics.
\newblock {\em Nature}, 370(6487):290--292, 1994.

\bibitem[HG04]{hanski2004ecology}
Ilkka~A Hanski and Oscar~E Gaggiotti.
\newblock {\em Ecology, genetics and evolution of metapopulations}.
\newblock Academic Press, 2004.

\bibitem[HSM17]{hidalgo2017species}
Jorge Hidalgo, Samir Suweis, and Amos Maritan.
\newblock Species coexistence in a neutral dynamics with environmental noise.
\newblock {\em Journal of theoretical biology}, 413:1--10, 2017.

\bibitem[JFNM14]{jayasinghe2014metabolic}
Nadeera Jayasinghe, Ashley Franks, Kelly~P Nevin, and Radhakrishnan Mahadevan.
\newblock Metabolic modeling of spatial heterogeneity of biofilms in microbial
  fuel cells reveals substrate limitations in electrical current generation.
\newblock {\em Biotechnology journal}, 9(10):1350--1361, 2014.

\bibitem[JK18]{joyce2018cancer}
Catherine Joyce and Anup Kasi.
\newblock Cancer, tumor-suppressor genes.
\newblock In {\em StatPearls [Internet]}. StatPearls Publishing, 2018.

\bibitem[Kim68]{kimura1968evolutionary}
Motoo Kimura.
\newblock Evolutionary rate at the molecular level.
\newblock {\em Nature}, 217(5129):624--626, 1968.

\bibitem[Kim89]{kimura1989neutral}
Motoo Kimura.
\newblock The neutral theory of molecular evolution and the world view of the
  neutralists.
\newblock {\em Genome}, 31(1):24--31, 1989.

\bibitem[Kom06]{komarova2006spatial}
Natalia~L Komarova.
\newblock Spatial stochastic models for cancer initiation and progression.
\newblock {\em Bulletin of mathematical biology}, 68(7):1573--1599, 2006.

\bibitem[KSFS15]{kessler2015neutral}
David Kessler, Samir Suweis, Marco Formentin, and Nadav~M Shnerb.
\newblock Neutral dynamics with environmental noise: Age-size statistics and
  species lifetimes.
\newblock {\em Physical Review E}, 92(2):022722, 2015.

\bibitem[KSN03]{komarova2003mutation}
Natalia~L Komarova, Anirvan Sengupta, and Martin~A Nowak.
\newblock Mutation--selection networks of cancer initiation: tumor suppressor
  genes and chromosomal instability.
\newblock {\em Journal of theoretical biology}, 223(4):433--450, 2003.

\bibitem[LCU{\etalchar{+}}07]{li2007visualization}
Xiao-Feng Li, Sean Carlin, Muneyasu Urano, James Russell, C~Clifton Ling, and
  Joseph~A O'Donoghue.
\newblock Visualization of hypoxia in microscopic tumors by immunofluorescent
  microscopy.
\newblock {\em Cancer research}, 67(16):7646--7653, 2007.

\bibitem[LL10]{lawler-limic}
G.~Lawler and V.~Limic.
\newblock {\em Random walk: a modern introduction}.
\newblock Cambridge University Press, Cambridge, 2010.

\bibitem[MFH14]{maciejewski2014evolutionary}
Wes Maciejewski, Feng Fu, and Christoph Hauert.
\newblock Evolutionary game dynamics in populations with heterogenous
  structures.
\newblock {\em PLoS Comput Biol}, 10(4):e1003567, 2014.

\bibitem[MIR{\etalchar{+}}04]{michor2004linear}
Franziska Michor, Yoh Iwasa, Harith Rajagopalan, Christoph Lengauer, and
  Martin~A Nowak.
\newblock Linear model of colon cancer initiation.
\newblock {\em Cell cycle}, 3(3):356--360, 2004.

\bibitem[Mor58]{moran1958random}
Patrick Alfred~Pierce Moran.
\newblock Random processes in genetics.
\newblock {\em Mathematical proceedings of the {C}ambridge {P}hilosophical
  {S}ociety}, 54(1):60--71, 1958.

\bibitem[MS18]{meyer2018noise}
Immanuel Meyer and Nadav~M Shnerb.
\newblock Noise-induced stabilization and fixation in fluctuating environment.
\newblock {\em Scientific reports}, 8(1):1--12, 2018.

\bibitem[MS20]{meyer2020evolutionary}
Immanuel Meyer and Nadav~M Shnerb.
\newblock Evolutionary dynamics in fluctuating environment.
\newblock {\em Physical Review Research}, 2(2):023308, 2020.

\bibitem[MV15]{melbinger2015impact}
Anna Melbinger and Massimo Vergassola.
\newblock The impact of environmental fluctuations on evolutionary fitness
  functions.
\newblock {\em Scientific reports}, 5:15211, 2015.

\bibitem[MY05]{MatsumotoYor2005}
H.~Matsumoto and M.~Yor.
\newblock Exponential functionals of {B}rownian motion, {I}: Probability laws
  at fixed time.
\newblock {\em Probability Surveys}, 2:312--347, 2005.

\bibitem[Nag80]{nagylaki1980strong}
Thomas Nagylaki.
\newblock The strong-migration limit in geographically structured populations.
\newblock {\em Journal of mathematical biology}, 9(2):101--114, 1980.

\bibitem[NKS{\etalchar{+}}02]{nowak2002role}
Martin~A Nowak, Natalia~L Komarova, Anirvan Sengupta, Prasad~V Jallepalli,
  Ie-Ming Shih, Bert Vogelstein, and Christoph Lengauer.
\newblock The role of chromosomal instability in tumor initiation.
\newblock {\em Proceedings of the National Academy of Sciences},
  99(25):16226--16231, 2002.

\bibitem[Pul88]{pulliam1988sources}
H~Ronald Pulliam.
\newblock Sources, sinks, and population regulation.
\newblock {\em The American Naturalist}, 132(5):652--661, 1988.

\bibitem[SF08]{stewart2008physiological}
Philip~S Stewart and Michael~J Franklin.
\newblock Physiological heterogeneity in biofilms.
\newblock {\em Nature Reviews Microbiology}, 6(3):199--210, 2008.

\bibitem[She04]{sherr2004principles}
Charles~J Sherr.
\newblock Principles of tumor suppression.
\newblock {\em Cell}, 116(2):235--246, 2004.

\bibitem[SP05]{santos2005scale}
Francisco~C Santos and Jorge~M Pacheco.
\newblock Scale-free networks provide a unifying framework for the emergence of
  cooperation.
\newblock {\em Physical Review Letters}, 95(9):098104, 2005.

\bibitem[SPL06]{santos2006cooperation}
Francisco~C Santos, Jorge~M Pacheco, and Tom Lenaerts.
\newblock Cooperation prevails when individuals adjust their social ties.
\newblock {\em PLoS Comput Biol}, 2(10):e140, 2006.

\bibitem[SRP06]{santos2006graph}
Francisco~C Santos, JF~Rodrigues, and JM~Pacheco.
\newblock Graph topology plays a determinant role in the evolution of
  cooperation.
\newblock {\em Proceedings of the Royal Society B: Biological Sciences},
  273(1582):51--55, 2006.

\bibitem[SSP08]{santos2008social}
Francisco~C Santos, Marta~D Santos, and Jorge~M Pacheco.
\newblock Social diversity promotes the emergence of cooperation in public
  goods games.
\newblock {\em Nature}, 454(7201):213--216, 2008.

\bibitem[TI91]{tachida1991fixation}
Hidenori Tachida and Masaru Iizuka.
\newblock Fixation probability in spatially changing environments.
\newblock {\em Genetics Research}, 58(3):243--251, 1991.

\bibitem[TPL07]{tomassini2007social}
Marco Tomassini, Enea Pestelacci, and Leslie Luthi.
\newblock Social dilemmas and cooperation in complex networks.
\newblock {\em International Journal of Modern Physics C}, 18(07):1173--1185,
  2007.

\bibitem[WG05]{whitlock2005probability}
Michael~C Whitlock and Richard Gomulkiewicz.
\newblock Probability of fixation in a heterogeneous environment.
\newblock {\em Genetics}, 171(3):1407--1417, 2005.

\bibitem[Wri31]{wright1931evolution}
Sewall Wright.
\newblock Evolution in mendelian populations.
\newblock {\em Genetics}, 16(2):97, 1931.

\bibitem[Yua16]{yuan2016spatial}
Yinyin Yuan.
\newblock Spatial heterogeneity in the tumor microenvironment.
\newblock {\em Cold Spring Harbor perspectives in medicine}, 6(8):a026583,
  2016.

\end{thebibliography}
	%\bibliography{RP}
	
	\appendix
	{
		\section{Proofs  Section \ref{sec:brownian}}\label{sec:scaling-proofs}
	}
	\noindent{\sc Proof of Proposition \ref{prop:MY}:} The first two are proved as Matsumoto and 
	Yor~{\cite[Proposition~5.9]{MatsumotoYor2005}}.  %The third is stated
	%as~(5.8) there, cited ~\cite{HariyaYor2004}, but there is a typo 
	%(a parameter $\alpha$ is introduced without a definition).  
	{We quickly derive the third by integrating}~\eqref{eq:MY1}.  Letting $y := x / \sqrt{t}$,
	\begin{eqnarray*}
		m_2 (t) & = & t^{-1} \int \frac{x e^{-x}}{\sinh (x)} \, dN(0,t) (x) \\
		& = & t^{-1/2} \int \frac{y e^{-\sqrt{t} y}}{\sinh (\sqrt{t} y)}
		\, dN(0,1) (y) \, .
	\end{eqnarray*}
	As $t \to \infty$, the quantity $y e^{-\sqrt{t} y} / \sinh (\sqrt{t} y)$
	converges pointwise to $2|y| \one_{y < 0}$.  Truncating, integrating
	and taking limits gives
	$$t^{1/2} m_2 (t) \to \int_{-\infty}^0 2 |y| \, dN(0,1) (y) 
	= \E |N(0,1)| = \sqrt{\frac{2}{\pi}} \, .$$
	$\Cox$
	
	\noindent{\sc Proof of Lemma \ref{lem:MY rescaled}:}
	Let $f_\alpha (x,t)$ denote the density of $A_\alpha (t)^{-1}$ at $x$.
	Let $W_t := (\alpha / \nu) B_{\nu^2 t / \alpha^2}$.  Then $\{ W_t \}$ 
	is also a standard Brownian motion and $\alpha B_t = {\nu} W_t$.  Hence,
	\begin{eqnarray}
	f_\alpha (x,t) dx & = & \P \left ( \frac{1}{\int_0^t e^{\alpha B_s} \, ds}
	\in [x , x+dx] \right ) \nonumber \\
	& = & \P \left ( \frac{1}{(\nu / \alpha)^2 \int_0^{(\alpha / \nu)^2 t} 
		e^{{\nu} W_u} \, du}
	\in [x , x+dx] \right ) \nonumber \\
	& = & \P \left ( \frac{1}{\int_0^{(\alpha / \nu)^2 t} e^{{\nu} W_u} \, du}
	\in [(\nu / \alpha)^2 x , (\nu / \alpha)^2 x + (\nu / \alpha)^2 dx] 
	\right ) \nonumber \\
	& = & \frac{\nu^2}{\alpha^2} f_\nu  \left ( \frac{\nu^2}{\alpha^2} x , 
	\frac{\alpha^2}{\nu^2} t \right ) \, dx \, .
	\end{eqnarray}
	Consequently, changing variables to $\theta = (\nu^2 / \alpha^2) x$, 
	\begin{eqnarray*}
		m_\alpha (t) & = & \int_{-\infty}^\infty x f_\alpha (x,t) \, dx \\
		& = & \frac{\nu^2}{\alpha^2} \int_{-\infty}^\infty x f_\nu 
		\left ( \frac{\nu^2}{\alpha^2} x , \frac{\alpha^2}{\nu^2} t \right ) 
		\, dx \\
		& = & \frac{\alpha^2}{\nu^2} \int_{-\infty}^\infty 
		\theta f_\nu \left ( \theta , \frac{\alpha^2}{\nu^2} t \right )
		\, d\theta \\
		& = & \frac{\alpha^2}{\nu^2} m_\nu \left ( \frac{\alpha^2}{\nu^2} t
		\right ) \, ,
	\end{eqnarray*}
	proving~\eqref{eq:alpha}.  Set $\nu = 2$ and $t = 1$, plug 
	into~\eqref{eq:MY3} and send $\alpha$ to infinity to obtain
	$$m_\alpha (1) = \frac{\alpha^2}{4} m_2 \left ( \frac{\alpha^2}{4} \right )
	\sim \frac{\alpha^2}{4} \sqrt{\frac{2}{\pi \alpha^2 / 4}} 
	= \frac{\alpha}{\sqrt{2 \pi}} \, ,$$
	proving~\eqref{eq:m}. 
	$\Cox$
	
	{
		\section{Proofs  Section \ref{sec:better}} \label{app:better-proofs}
	}
	
	{\sc Proof of Lemma \ref{lem:RW}:}
	For Brownian motion run to time $M$, the reflection principle gives
	$$\P (\sup_{0 \leq t \leq M} B_t \leq u) = 1 - 2 \P_0 (B_M \geq u)$$
	which is asymptotic to $(2/\pi)^{1/2} u M^{-1/2}$ uniformly as $u$
	varies over the $(0,M^{1/2 - \ee}]$ for any $\ee \in (0,1/2)$.
	Pick $\alpha > 1/2$.  By~\eqref{eq:stretched}, one then has
	$$\sqrt{\frac{2}{\pi}} \frac{u - C_\alpha \log M}{M^{1/2}} - M^{-\alpha}
	\leq \P (\max_{1 \leq j \leq M} S_j \leq u) 
	\leq \sqrt{\frac{2}{\pi}} \frac{u + C_\alpha \log M}{M^{1/2}} 
	+ M^{-\alpha} \, .$$
	$\Cox$
	
	{\sc Proof of Lemma \ref{lem:moment-limit}:}
	By \eqref{eq:MY1}, we compute \begin{align*}
	\E \frac{B_M^-}{\int_0^M \exp(2 B_s)\,ds} &= \int_{\R} \frac{-x^2 e^{-x} \one_{x \leq 0}}{M \sinh(x)} \,dN(0,M)(x) \\
	&=\int_{\R} \frac{-y^2 e^{-\sqrt{M}y} \one_{y \leq 0}}{\sinh(\sqrt{M}y)} \,dN(0,1)(y)\,.
	\end{align*}
	As $M \to \infty$, $e^{-\sqrt{M}y}/\sinh(\sqrt{M}y) \to -2\cdot  \one_{y < 0}$; truncating, integrating and taking limits then gives $$\E \frac{B_M^-}{\int_0^M \exp(2 B_s)\,ds} \to \int_{- \infty}^0 2 y^2 dN(0,1)(y) = 1\,.$$
	
	By differentiating Equation (5.7) of~\cite{MatsumotoYor2005}  
	twice with respect to $\lambda$, we have that 
	$$\E\left[\left(\int_0^M \exp(2 B_s)\,ds\right)^{-2} 
	\, \bigg|\, B_M = x \right] 
	= \frac{e^{-2x}\left(x^2 \sinh(x) + M x \cosh(x) - M \sinh(x) \right)}
	{M^2 \sinh(x)^3}\,.$$
	This implies that 
	\begin{align*}
	\frac{\E[Y_M^2]}{\sqrt{M}} &= \int_\R \frac{e^{-2x}\one_{x < 0}(x^4\sinh(x) + Mx^3 \cosh(x) - Mx^2 \sinh(x)) }{M^{5/2} \sinh(x)^3}\,dN(0,M)(x) \\
	&= \int_\R \frac{e^{-2y \sqrt{M}}\one_{y < 0}(M^2y^4\sinh(\sqrt{M}y) + M^{5/2}y^3 \cosh(\sqrt{M}y) - M^2y^2 \sinh(\sqrt{M}y)) }{M^{5/2} \sinh(\sqrt{M}y)^3}\,dN(0,1)(y)\,.
	\end{align*}
	The integrand converges to $-4 y^3 \one_{y < 0}$ as $M \to \infty$; truncating, integrating and taking limits shows $$\frac{\E[Y_M^2]}{\sqrt{M}} \to \int_\R -4 y^3 \one_{y < 0} dN(0,1)(y) = 4 \sqrt{\frac{2}{\pi}}\,.$$	
	$\Cox$
	
	{\sc Proof of Lemma \ref{lem:expectation-functional}:}
	Using Lemma~\ref{lem:moment-limit} together with Chebyshev's inequality, we see that $$\P[Y_M \geq t] \leq C \sqrt{M} t^{-2}$$ for some constant $C$.  This implies that $$\E[Y_M \one_{Y_M \geq M^{1/2 + \eps/2} }] = \int_{M^{1/2 + \eps/2}}^\infty \P[Y_M \geq t] \,dt \leq C M^{-\eps/2} = o(1)\,. $$
	
	We then may write $$\E[Y_M \one_{E}] = \E[Y_M \one_{E} \one_{Y_M \geq M^{1/2 + \eps/2} }] + \E[Y_M \one_{E} \one_{Y_M < M^{1/2 + \eps/2} }] \leq o(1) + M^{1/2 + \eps/2} \P[E] \to 0\,.$$ 
	$\Cox$
	
	{\sc Proof of Lemma \ref{lem:KMT-use}:}
	By Lemma~\ref{lem:KMT}, there exists a coupling of $\left\{\frac{\St_t}{\delta'/ \sqrt{2}} \right\}_{0 \leq t \leq T}$ and $\{B_t\}_{0 \leq t \leq T}$ so that 
	$$\P \left[  \sup_{t \in [0,T]} \left|\frac{\St_t}{\delta'/\sqrt{2}} - B_t\right| \geq C \log(T)  \right] \leq \frac{1}{T^2}\,.$$
	Let $E$ denote the event in the above probability; conditioned on $E^c$, we have
	$$A  = \int_0^T \exp(\St_s)\,ds = \int_0^T \exp(\delta' B_s/\sqrt{2} + O(\delta' \log(T)))\,ds \sim \int_0^T \exp(\delta' B_s / \sqrt{2})\,ds$$
	where the last asymptotic equality follows  $\delta' \log (T) = O(\delta \log \delta) \to 0$.  This means that 
	\begin{equation}\label{eq:A-moment}\E[A^{-1}] = \E[A^{-1} \one_E] + \E[A^{-1} \one_{E^c}] = O(T^{-2}) + (1+o(1))\E \frac{\one_{E^c}}{\int_0^T \exp(\delta' B_s /\sqrt{2}) \,ds}\,.\end{equation}
	
	By Brownian scaling, 
	$$\int_0^T \exp\left(\delta' B_s /\sqrt{2}  \right)\,ds = \int_0^T \exp\left(2 ( B_{ (\delta')^2 s / 2^{3}} )  \right)\,ds = 2^{3}(\delta')^{-2}\int_0^{T(\delta')^2 / 2^{3} } \exp(2 B_s)\,ds $$
	{ since $\delta' \sim 2\delta$.}
	
	By assumption, $T (\delta')^2 \to \infty$; further, the uniform integrability statement in Lemma \ref{lem:UI-1} shows that $$\E\left[\left(\int_0^{T(\delta')^2 / 2^{3} } \exp(2 B_s)\,ds\right)^{-1} \one_{E^c}\right] \sim \E\left[\left(\int_0^{T(\delta')^2 / 2^{3} } \exp(2 B_s)\,ds\right)^{-1} \right]$$ since the variables $\one_{E} X_M / \E [X_M]$ converge almost surely to $0$ {(because $\P(E) \to 0$)} and are uniformly integrable.  
	From here, \eqref{eq:A-moment} then gives $${\E[A^{-1}] = O(T^{-2}) + (1 + o(1))\cdot\frac{(\delta')^2}{2^3} \cdot \sqrt{\frac{2}{\pi T}} \cdot \frac{2^{3/2}}{\delta'} = O(T^{-2}) + (1 + o(1))\frac{2^2}{\delta'}\frac{1}{\sqrt{\pi T}} \sim \frac{\delta}{\sqrt{\pi T}}\,.}$$
	
	Similarly, \begin{align*}\E[Z^- / A] &= \E\left[ \frac{Z^-}{A} \cdot \one_{E}  \right] + \E\left[ \frac{Z^-}{A} \cdot \one_{E^c}  \right] \\
	&= O(T^{-2}) + \frac{\delta'}{\sqrt{2}}\E\left[ \frac{B_T^- \one_{E^c}}{\int_0^T \exp\left(\delta' B_s /\sqrt{2} \right)\,ds }\right] + O\left( \delta \log(T) \E\left[\left(\int_0^T \exp(\delta' B_s/ \sqrt{2})\,ds\right)^{-1} \right] \right) \\
	&= O\left(\frac{\log(T)}{ \sqrt{T}}\right) + \frac{\delta'}{\sqrt{2}}\E\left[ \frac{B_T^- \one_{E^c}}{\int_0^T \exp\left(\delta' B_s /\sqrt{2} \right)\,ds }\right]\,. \end{align*}
	
	Using { the same Brownian scaling as in below \eqref{eq:A-moment}}, note  \begin{align*}
	\frac{B_T^-}{\int_0^T \exp(\delta' B_s/\sqrt{2})\,ds} &\overset{d}= \frac{\delta'}{2^{3/2}}\frac{B_{(2^{3/2}/\delta')^2 T}^- }{\int_0^{(2^{3/2}/\delta')^2 T} \exp(2 B_{s}) \,ds } = \frac{\delta'}{2^{3/2}} Y_M
	\end{align*}
	
	where we set $M = (2^{3/2} /\delta')^2 T$.  By Lemma~\ref{lem:expectation-functional} together with Lemma~\ref{lem:moment-limit}, we have $$\E[Y_M \one_{E^c}]= \E[Y_M] + o(1) = 1 + o(1)\,. $$
	
	Combining the above equalities provides $$\E[ Z^- / A] = O \left(\frac{\log(T)}{ \sqrt{T}}\right) + (1 + o(1))\cdot \frac{\delta'}{\sqrt{2}}\cdot\frac{\delta'}{2^{3/2}}  \sim \delta^2\,.$$ 
	$\Cox$
	
\end{document}